\newtheorem{thm}{Theorem}
\newtheorem{lem}{Lemma}
\newtheorem{cor}{Corollary}
\newtheorem{ass}{Assumption}
\def \X {\mathcal{X}}
\def \R {\mathbb{R}}
\def \w {\mathbf{w}}
\def \x {\mathbf{x}}
\def \x {\mathbf{x}}
\def \u {\mathbf{u}}
\def \p {\mathbf{p}}
\def \y {\mathbf{y}}
\def \u {\mathbf{u}}
\def \g {\mathbf{g}}
\def \y {\mathbf{y}}
\def \x {\mathbf{x}}
\def \g {\mathbf{g}}
\def \u {\mathbf{u}}
\def \w {\mathbf{w}}
\def \R {\mathbb{R}}
\DeclareMathOperator{\Regret}{R}
\def \N {\mathcal{N}}
\def \q {\mathbf{q}}
\def \p {\mathbf{p}}
\def \q {\mathbf{q}}
\def \gh {\widehat{\g}}
\def \X {\mathcal{X}}
\def \btheta {\boldsymbol{\theta}}
\def \bpi {\boldsymbol{\pi}}
\title[On the Initialization for Convex-Concave Min-max Problems]{On the Initialization for Convex-Concave Min-max Problems}
\begin{document}
	
	\maketitle
	\vspace*{-0.2in}
	\begin{abstract}%
		Convex-concave min-max problems are ubiquitous in machine learning, and people usually utilize first-order methods (e.g., gradient descent ascent) to find the optimal solution. One feature which separates convex-concave min-max problems from convex minimization problems is that the best known convergence rates for min-max problems have an explicit dependence on the size of the domain, rather than on the distance between initial point and the optimal solution. This means that the convergence speed does not have any improvement even if the algorithm starts from the optimal solution, and hence, is oblivious to the initialization. Here, we show that strict-convexity-strict-concavity is sufficient to get the convergence rate to depend on the initialization. We also show how different algorithms can asymptotically achieve initialization-dependent convergence rates on this class of functions. Furthermore, we show that the so-called ``parameter-free'' algorithms allow to achieve improved initialization-dependent asymptotic rates without any learning rate to tune. In addition, we utilize this particular parameter-free algorithm as a subroutine to design a new algorithm, which achieves a novel non-asymptotic fast rate for strictly-convex-strictly-concave min-max problems with a growth condition and H{\"o}lder continuous solution mapping. Experiments are conducted to verify our theoretical findings and demonstrate the effectiveness of the proposed algorithms. 
	\end{abstract}
	
	\begin{keywords}%
		Convex-concave, Min-max, Initialization, Fast Rates, Parameter-Free%
	\end{keywords}

	\section{Introduction}
	In this paper, we are interested in the following problem
	\begin{equation}
		\label{eq:problem}
		\min_{\x\in\mathcal{X}}\max_{\y\in\mathcal{Y}}\ F(\x,\y),  
	\end{equation}
	where $\mathcal{X}\subset\R^m$, $\mathcal{Y}\subset\R^n$ are convex and compact sets.
	This problem has broad applications in machine learning, e.g., stochastic AUC maximization~\citep{ying2016stochastic}, generative adversarial nets~\citep{goodfellow2014generative}, robust optimization~\citep{ben2009robust}, and adversarial training~\citep{madry2017towards}. Here, we will assume that $F(\x,\y)$ is convex in $\x$ and concave in $\y$.
	
	The canonical method for solving the convex-concave game is the primal-dual gradient method (a.k.a., gradient descent ascent)~\citep{nemirovskij1983problem,nemirovski2009robust}. For example, in the Euclidean setup, primal-dual gradient method is performing gradient descent on the primal variable $\x$ and gradient ascent on the dual variable $\y$ simultaneously. It is proved ~\citep{nemirovskij1983problem,nemirovski2009robust} that the averaged solution of primal-dual gradient method has good convergence guarantees in terms of the duality gap, which is
	\begin{equation}
		\label{eq:minmax}
		\max_{\y\in\mathcal{Y}} F(\bar{\x}_T, \y) - \min_{\x\in\mathcal{X}} F(\x,\bar{\y}_T)\leq \frac{D^2}{\eta T}+\eta G^2,
	\end{equation}
	where $(\bar{\x}_T, \bar{\y}_T)=(\frac{1}{T}\sum_{t=1}^{T}\x_t, \frac{1}{T}\sum_{t=1}^{T}\y_t)$ is the averaged solution, $D$ is the diameter of the domain, and $G$ is an upper bound on the norm of the  gradient.
	To get the tightest bound for the RHS of~(\ref{eq:minmax}), we can choose $\eta=\frac{D}{G\sqrt{T}}$ and end up with $\frac{DG}{\sqrt{T}}$ convergence rate for the duality gap.

	Strangely enough, the optimal learning rate scheme in the primal-dual gradient method, as well as its corresponding convergence rate, have an explicit dependence on the size of the domain and are completely independent of the initialization. In other words, regardless of how close the initialization is to the optimal solution, this algorithm uses the same learning rate and ends up with the same complexity guarantees. This is counter-intuitive: we would expect to be able to obtain a faster convergence if the initialization is closer to the optimal solution. In other words, we aim to obtain an initialization-dependent convergence rate, i.e., $\widetilde{\mathcal{O}}\left(\frac{f(\text{dist}(\x_0,\x_*)+f(\text{dist}(\y_0,\y_*))}{\sqrt{T}}\right)$ rate,\footnote{The $\widetilde{\mathcal{O}}$ notation hides poly-logarithmic terms. In the case that $\x_0=\x_*$, $\y_0=\y_*$, we allow the big O notation to hide lower order terms such as $\mathcal{O}(1/T)$.} where $\text{dist}$ is some metric depending on the geometry of the problem, and $f(\cdot):\R^+\mapsto\R^+$ is some non-decreasing function depending on the algorithm with $f(0)=0$. Yet, the hardness results in the optimization literature show that $\Omega(\frac{DG}{\sqrt{T}})$ complexity lower bound is unimprovable in general~\citep{nemirovskij1983problem,juditsky2011solving}. This naturally motivates the following questions:
	
	\textit{What is the suitable class of the functions such that one can use first-order algorithms to get the initialization-dependent rates? In addition, how to utilize this fact to design better algorithms with faster rates for min-max problems?}
	

	
	
	The main goal of this paper is to answer these questions. We show that strict-convexity-strict-concavity is the key to derive initialization-dependent rates as well as fast rates of first-order algorithms for solving min-max problems.

	More in details, our contributions are summarized as follows.
	\begin{itemize}
		\item We identify a condition (i.e., the problem instance is a strictly-convex-strictly-concave min-max problem), and we show that this condition is sufficient to obtain asymptotic initialization-dependent rates for any first-order algorithm with a generic convergence guarantee.
		\item We show that invoking a parameter-free algorithm~\citep{OrabonaP16,CutkoskyO18} on both primal and dual variables can achieve asymptotic initialization-dependent rates without any learning rate to tune. Taking into account the known upper bounds of gradient descent ascent without additional assumptions on the curvature, it remains unclear how to obtain an initialization-dependent rate unless we have the knowledge  of the distance between initialization and the optimal solution.
		
		\item When the function admits a growth condition and H{\"o}lder continuous solution mapping, we design a new algorithm by utilizing the parameter-free algorithm as a subroutine and periodically restarting the subroutine. Thanks to the initialization-dependent rate, we prove that our algorithm enjoys novel fast non-asymptotic rates. To the best of our knowledge, this is the first work leveraging the function growth condition and H{\"o}lder continuous solution mapping to obtain these improved non-asymptotic rates for min-max problems.
		\item We verify our theoretical results by conducting both synthetic experiments and distributionally robust optimization on benchmark datasets. We empirically show that our algorithms exhibit good performance in practice.
	\end{itemize}
	
	\section{Related Work}
	\label{sec:rel}
	
	\noindent\textbf{Convex-Concave Min-max Optimization} Convex-concave Min-max Optimization is widely studied in optimization literature, and it is closely related to the variational inequality. The work of~\cite{korpelevich1976extragradient} proposed the extragradient method for solving variational inequalities, and this was later extended into non-euclidean space (e.g., mirror-prox~\citep{nemirovski2004prox}, dual extrapolation~\citep{nesterov2007dual}). The stochastic version of mirror-prox was proposed by~\citet{juditsky2011solving}. \citet{HsiehIMM20} proposed variable stepsize scaling for extragradient method to improve the algorithm's performance.  In nonsmooth case,~\citet{nemirovski2009robust} analyzed the primal-dual gradient method in non-euclidean space. \citet{nedic2009subgradient}  considered subgradient methods for solving min-max problems and provided per-iteration convergence rate estimates on the solutions. \citet{monteiro2010complexity} designed hybrid proximal extragradient methods with a different performance measure. \citet{bach2019universal} provided a universal algorithm for solving variational inequalities, which adapts to noise and smoothness.
	
	There are several papers considering specific cases in convex-concave min-max optimization, including functions with a bilinear term~\citep{nesterov2005smooth,chen2014optimal,chen2017accelerated,he2016accelerated,liu2018fast,daskalakis2018training,liang2019interaction,gidel2019negative,mokhtari2020unified,azizian2020accelerating,bailey2020finite},  smooth or strongly-convex-(strongly)-concave~\citep{nesterov2006solving,zhao2019optimal,lin2020near,yan2020optimal}, the last-iterate convergence~\citep{abernethy2019last,daskalakis2018last,golowich2020last}, adapts to unknown smoothness parameter~\citep{diakonikolas2020halpern} . There are also some papers about establishing lower bounds in various cases~\citep{zhang2019lower,ibrahim2020linear,ouyang2021lower}.
	
	However, none of these works provide an upper bound for duality gap which explicitly depends on the distance between the initialization and the optimal solution without assuming strong convexity/concavity~\citep{chambolle2011first}.
	
	
	
	\noindent\textbf{Parameter-Free Online Convex Optimization}
	In Online Convex Optimization (OCO)~\citep{Gordon99,Zinkevich03}, the aim of the learner is to minimize the regret w.r.t. any fixed predictor.  Most of the OCO algorithm require some knowledge of the competitor, for example, its norm, in order to achieve the smallest regret~\citep[see, e.g.,][]{Orabona19}. Hence, it becomes impossible to compete \emph{uniformly} with all competitors, unless the algorithm has some knowledge of the future. Morally speaking, the OCO setting is a strict generalization of the setting of stochastic optimization of convex functions and competing with any fixed predictor corresponds exactly to design convex optimization algorithms that have optimal dependency on the distance between the initial point and the optimal solution. Again, without some knowledge of the norm of the optimal solution, classic optimization algorithms fails to get the right dependency.
	
	So-called \emph{parameter-free} algorithms avoid setting step sizes completely and get the optimal dependency uniformly on all competitors~\citep{StreeterM12}. Note that ``parameter-free'' is a technical word that denotes only this ability of the algorithm and not a general lack of knowledge of \emph{any} characteristic of the problem, see \citet[Section 9.7]{Orabona19}.
	Most of these algorithms are based on Follow-the-Regularized-Leader\footnote{Dual Averaging~\citep{Nesterov09} is a specialization of FTRL to linear functions.} (FTRL)~\citep{Shalev-Shwartz07,AbernethyHR08} with a time-varying linearithmic (non-strongly convex) regularizer~\citep[e.g.,][]{StreeterM12,Orabona14,Cutkosky16,CutkoskyB17,Kotlowski19,KempkaKW19}. These algorithms can also be viewed as betting schemes through the duality between regret and reward~\citep{McMahanO14,OrabonaP16,CutkoskyO18,CutkoskyS19}.
	
	As far as we know, we present the first application of parameter-free algorithms to convex-concave min-max problems.
	\vspace*{-0.1in}
	\section{Problem Setup}
	
	\noindent\textbf{Notation}
	Denote by $\|\cdot\|$ the Euclidean norm. Define $\langle\cdot,\cdot\rangle$ as the inner product in Euclidean space.  Define $\Pi_{\mathcal{S}}$ as the orthogonal projection operator onto the set $\mathcal{S}$. We denote vectors by bold letters, e.g., $\x,\g$. We say that $f(x)=o(g(x))$ iff $\lim_{x \rightarrow \infty} \frac{f(x)}{g(x)}=0$.
	
	\noindent\textbf{Setting and Assumptions}
	In~(\ref{eq:problem}), we assume that $F$ is convex in the first argument and concave in the second one. Moreover, we assume to have access to a first-order black-box optimization oracle $\g=(\g^{\x},\g^{\y})$ at  any point $(\x,\y)$,
	where $\g^{\x}\in \partial_{\x} F(\x,\y)$, the subgradient of $F$ w.r.t. its first argument, and $\g^{\y} \in -\partial_{\y} (-F(\x,\y))$, the negative subgradient of $-F$ w.r.t. its second argument. 
	
	We will also assume that the subgradients $\g^{\x}$ and $\g^{\y}$ have bounded support. In particular, we assume w.l.o.g. that $\|\g^{\x}\|\leq 1$ and $\|\g^{\y}\|\leq 1$ for all $\x \in \mathcal{X}$ and $\y \in \mathcal{Y}$. Note that it is reasonable because we assume bounded domains.

	\paragraph{An Sufficient  Condition for Initialization-Dependent Rates: the Class of Strictly-Convex-Strictly-Concave Min-Max Problems}

	In this paper, we aim to show that we can achieve initialization-dependent convergence rates for the class of strictly-convex-strictly-concave min-max problems (which covers the class of strongly-convex-strongly-concave min-max problems as a subclass), which means that $F(\x,\y)$ is strictly convex in $\x$ and strictly concave in $\y$.  The definition of strictly convex function is the following.
	\begin{definition}
		A function $h$ is strictly convex if $h(\lambda\x+(1-\lambda)\y)<\lambda h(\x)+(1-\lambda)h(\y)$ for any $0<\lambda<1$ and any $\x\neq \y$.
	\end{definition}
	
	Given the above arguments, in the following we will assume that the following assumption holds:
	\begin{ass}
		\label{ass:2}
		$F(\x,\y)$ is strictly convex in $\x$ and strictly concave in $\y$ and we denote by $(\x_*,\y_*)$ the optimal solution of the min-max problem~\eqref{eq:problem}. $F(\x,\y)$ is continuous in $\y$ (resp. $\x$) given $\x$ (resp. $\y$).
	\end{ass}
	
	
	\section{Algorithms with Initialization-Dependent Convergence Rates}
	\label{sec:initialization-rate}
	In this section, we first present a general convergence theory for first-order algorithms to obtain initialization-dependent convergence rates, under the assumption that the function is strictly-convex-strictly-concave (in Section~\ref{sec:general}). We also show conditions under which we get initialization-dependent convergence rates for gradient descent ascent~(in Section~\ref{sec:GDAalgo}).
	Then, we show a parameter-free algorithm that always enjoys an initialization-dependent convergence rate~(in Section~\ref{sec:pmalgo}). Finally we discuss possible alternative approches to obtain initialization-dependent rate~(in Section~\ref{sec:discussion}).
	
	
	\subsection{General Convergence Theory for First-order Algorithms}
	\label{sec:general}
	
	In this section, we present our main theorem (Theorem~\ref{main:metathm}), which characterizes a general convergence theory for first-order algorithms to achieve initialization-dependent asymptotic rates.
	
	\begin{thm}
		\label{main:metathm}
		Let $D_\mathcal{X}$ be the diameter of $\mathcal{X}$ and $D_\mathcal{Y}$ is the diameter of $\mathcal{Y}$, and $D=\max(D_{\mathcal{X}},D_{\mathcal{Y}})$.
		Suppose Assumption~\ref{ass:2} holds, and there is an algorithm $\mathcal{A}$ which returns a solution $(\widetilde{\x}_T,\widetilde{\y}_T)$ after $T$ iterations with following guarantee:
		\begin{equation}
			\label{eq:boundgeneral1}
			\max_{\y\in\mathcal{Y}} F(\widetilde{\x}_T, \y) - \min_{\x\in\mathcal{X}} F(\x,\widetilde{\y}_T)
			\leq A(\|\x_0-\x'_T\|)B(T)+A(\|\y_0-\y'_T\|)B(T)+C(T),
		\end{equation}
		where $\x'_T=\arg\min_{\x}F(\x,\widetilde{\y}_T)$, $\y'_T=\arg\max_{\y}F(\widetilde{\x}_T,\y)$, $A:\R_+\to \R$ is non-decreasing, $A(0)=0$, $B: \N \to\R_{++}$ and $C:\N \to\R_{++}$ are non-increasing, $\lim\limits_{T\rightarrow\infty} B(T)=0$, $\lim\limits_{T\rightarrow\infty} C(T)=0$, $A(x+y)\leq c\left(A(x)+A(y)\right)$ with $c$ being a positive constant independent of $D$. Then, we have
		\begin{equation}
			\label{eq:boundgeneral} 
			\max_{\y\in\mathcal{Y}} F(\widetilde{\x}_T, \y) - \min_{\x\in\mathcal{X}} F(\x,\widetilde{\y}_T)
			\leq c B(T) (A(\|\x_0-\x_*\|) +  A(\|\y_0-\y_*\|)) + R(T) + C(T),
		\end{equation}
		where the residual term $R: \N \rightarrow \R_+$ has the following properties:
		\[
		R(t) = o(B(t)) \quad \text{and} \quad R(T) \leq 2 c\cdot A(D)B(T)~.
		\]
	\end{thm}
	Theorem~\ref{main:metathm} indicates that if we have any algorithm with guarantees of form~(\ref{eq:boundgeneral1}), then under Assumption~\ref{ass:2}, it automatically enjoys an initialization-dependent asymptotic convergence rate as illustrated by~(\ref{eq:boundgeneral}).
	
	The Assumption~\ref{ass:2} is crucial to get the desired bound due to the following reasons. First, it can guarantee that both $\x'_T$ and $\y'_T$ are unique. Second, we can utilize this assumption to show that $\x'_T$ ($\y'_T$) gets close to $\x_*$ ($\y_*$) when the algorithm runs a sufficiently large number of iterations, which ends up with the $o(B(T))$ term in the bound~(\ref{eq:boundgeneral}). 
	
	\begin{proof}[Proof of Theorem~\ref{main:metathm}]
		According to the fact that $A(x)$ is non-decreasing and \eqref{eq:boundgeneral1}, and noting that $\|\x_0-\x'_T\|\leq \|\x_0-\x_*\|+\|\x_*-\x'_T\|$, $\|\y_0-\y'_T\|\leq \|\y_0-\y_*\|+\|\y_*-\y'_T\|$, we have
		\begin{equation}
			\begin{aligned}
				&\max_{\y\in\mathcal{Y}} F(\widetilde{\x}_T, \y) - \min_{\x\in\mathcal{X}} F(\x,\widetilde{\y}_T)\\
				&\quad \leq A(\|\x_0-\x_*\|+\|\x_*-\x'_T\|)B(T)+A( \|\y_0-\y_*\|+\|\y_*-\y'_T\|)B(T)+C(T)\\
				&\quad \leq c B(T)\left[A(\|\x_0-\x_*\|)+A(\|\y_0-\y_*\|) + A(\|\x_*-\x'_T\|)+A(\|\y_*-\y'_T\|)\right]+C(T),
			\end{aligned}
		\end{equation}
		where the last inequality holds due to the fact that $A(x+y)\leq c(A(x)+A(y))$. It remains to show that $A(\|\x_*-\x'_T\|)+A(\|\y_*-\y'_T\|)\rightarrow 0$ when $T\rightarrow\infty$. Since $A(0)=0$, it suffices to show that  $\|\x_*-\x'_T\|\rightarrow 0$ and $\|\y_*-\y'_T\|\rightarrow 0$.
		
		Note that 
		\begin{align}
			F(\x_*,\y_*)-F(\x_*,\widetilde{\y}_T)
			&\leq F(\x_*,\y_*)-\min_{\x\in\mathcal{X}}F(\x,\widetilde{\y}_T)
			\leq \max_{\y\in\mathcal{Y}}F(\widetilde{\x}_T,\y)-\min_{\x\in\mathcal{X}}F(\x,\widetilde{\y}_T)+C(T) \nonumber \\
			&\leq A(\|\x_0-\x'_T\|)B(T)+A(\|\y_0-\y'_T\|)B(T) + C(T) \nonumber \\
			&\leq A(D_\mathcal{X})B(T)+A(D_\mathcal{Y})B(T)+C(T) 
			\leq 2A(D)B(T)+C(T)~. \label{eq:2:lemma2}
		\end{align}
		
		(i). We now claim that $\widetilde{\y}_T\rightarrow\y_*$ when $T\rightarrow \infty$. Let's see why.
		
		Note that $F(\x,\y)$ is strictly concave in terms of $\y$. Define $\widetilde{F}(\y)=F(\x_*,\y)$. Taking the limit for $T\rightarrow \infty$ in~(\ref{eq:2:lemma2}), for the sequence $\{\widetilde{\y}_t\}_{t=1}^{\infty}$, we have $\widetilde{F}(\widetilde{\y}_t)\rightarrow \widetilde{F}(\y_*)$. Given that the domain is bounded, we can extract a convergent subsequence $\{\widetilde{\y}_{t_j}\}\subset\{\widetilde{\y}_t\}$ and we assume that $\widetilde{\y}_{t_j}\rightarrow \widetilde{\y}$. By the continuity of $\widetilde{F}(\y)$ in terms of $\y$, we know that $\widetilde{F}(\bar{\y}_{t_j})\rightarrow \widetilde{F}(\widetilde{\y})$. Now, $\widetilde{F}(\widetilde{\y}_{t_j})$ is also a subsequence of the convergent sequence $\widetilde{F}(\widetilde{\y}_t)$, then $\widetilde{F}(\widetilde{\y}_{t_j})\rightarrow \widetilde{F}(\y_*)$. Since $\y_*$ is uniquely defined, this implies that $\widetilde{\y}=\y_*$. This means that any convergent subsequence of $\{\widetilde{\y}_t\}_{t=1}^{\infty}$ converges to $\y_*$, so $\widetilde{\y}_T\rightarrow\y_*$ when $T\rightarrow\infty$.
		
		(ii). Our next claim is that the mapping $\arg\min_{\x\in\mathcal{X}} F(\x,\y)$ is a continuous function in $\y$. 
		
		First, define $H(\y)=\arg\min_{\x\in\mathcal{X}} \ F(\x,\y)$. By the compactness of $\mathcal{Y}$, we have a sequence $\y_k\rightarrow\y_*$. Define $\x_k=H(\y_k)$ and $\x_*=H(\y_*)$. By the compactness of $\mathcal{X}$, there exists a convergent subsequence $\x_{k_i}$ of $\x_k$, and we denote its limit by $\widetilde{\x}$. From the above, we have $F(\x_{k_i},\y_{k_i})\leq F(\x,\y_{k_i})$ for all $\x\in\mathcal{X}$, that implies $F(\widetilde{\x},\y_*)\leq F(\x,\y_*)$ for any $\x\in\mathcal{X}$. In particular, this implies $F(\widetilde{\x},\y_*)\leq F(\x_*,\y_*)$. By the uniqueness of the minimizer, we must have $\widetilde{\x}=\x_*$. Given that any convergent subsequence converges to $\x_*$, this means that $\x_k\rightarrow\x_*$ and hence $\arg\min_{\x\in\mathcal{X}} F(\x,\y)$ is a continuous function in terms of $\y$.
		
		Combining the above two claims (i) and (ii), and by the definition of $\x_*$ and $\x'_T$, we obtain that $\x'_T\rightarrow \x_*$ when $T\rightarrow\infty$. Hence $\|\x_*-\x'_T\|\rightarrow 0$. A parallel argument can guarantee that $\|\y_*-\y'_T\|\rightarrow 0$. This completes the proof.
	\end{proof}
	
	\subsection{Initialization-dependent Rate by Gradient Descent Ascent}
	\label{sec:GDAalgo}
	In this section, we show that the standard algorithm gradient descent ascent satisfies~(\ref{eq:boundgeneral1}). The update rule of gradient descent ascent is
	\[
	\begin{cases}
		\x_{t+1}=\Pi_{\mathcal{X}}\left[\x_t-\eta\g^{\x}(\x_t,\y_t)\right]\\
		\y_{t+1}=\Pi_{\mathcal{Y}}\left[\y_t+\eta\g^{\y}(\x_t,\y_t)\right],
	\end{cases}
	\]
	where $\g^{\x}(\x_t,\y_t)\in\partial_{\x}F(\x_t,\y_t)$, $\g^{\y}(\x_t,\y_t)\in-\partial_{\x}\left(-F(\x_t,\y_t)\right)$, $\Pi$ is the projection operator, $\eta$ is the learning rate, and $\partial_{\x}$, $\partial_{\y}$ denote the subdifferential in terms of $\x$ and $\y$ respectively. If we use $\eta=\alpha/\sqrt{T}$ and run the gradient descent ascent for $T$ iterations, the standard theory of gradient descent ascent~\citep{nemirovski2009robust} shows that
	\[
	\max_{\y\in\mathcal{Y}} F(\bar{\x}_T, \y) - \min_{\x\in\mathcal{X}} F(\x,\bar{\y}_T)
	\leq A(\|\x_0-\x'_T\|)B(T)+A(\|\y_0-\y'_T\|)B(T)+C(T),
	\]
	where $A(x)=\frac{x^2}{2\alpha}$, $B(T)=\frac{1}{\sqrt{T}}$, $C=\frac{\alpha}{\sqrt{T}}$, $\x'_T=\arg\min_{\x\in\mathcal{X}}F(\x,\y_T)$, and $\y'_T=\arg\min_{\y\in\mathcal{Y}}F(\bar{\x}_T,\y)$. We can verify that it satisfies the premises of Theorem~\ref{main:metathm} (note that $A(x+y)\leq 2A(x)+2A(y)$). Hence, invoking Theorem~\ref{main:metathm} yields
	\begin{equation*}
		\begin{aligned}
			\max_{\y\in\mathcal{Y}} F(\bar{\x}_T, \y) - \min_{\x\in\mathcal{X}} F(\x,\bar{\y}_T)
			&\leq 2 A(\|\x_0-\x_*\|)B(T)+ 2 A(\|\y_0-\y_*\|)B(T)+R(T)+\frac{\alpha}{\sqrt{T}}\\
			& =\frac{\|\x_0-\x_*\|^2+\|\y_0-\y_*\|^2}{\alpha \sqrt{T}}+\frac{\alpha}{\sqrt{T}}+R(T),
		\end{aligned}
	\end{equation*}
	where $R(t) = o(\frac{1}{\sqrt{t}})$ and $R(T) \leq \frac{2 D^2}{\alpha\sqrt{T}}$.
	
	From the above, we see that the hyperparameter $\alpha$ in the learning rate plays a major role in the convergence rate. There are 3 possible choices for $\alpha$.
	\begin{itemize}
		\item The optimal setting of $\alpha$ would be $\sqrt{\|\x_0-\x_*\|^2+\|\y_0-\y_*\|^2}$, that could give an initialization-dependent convergence rate of $\mathcal{O}(\frac{\sqrt{\|\x_0-\x_*\|^2+\|\y_0-\y_*\|^2}}{\sqrt{T}})$. However, it is easy to realize that such setting is impossible in the black-box optimization setting: The algorithm has no way to estimate the initial condition, not even in the convex optimization case. Hence, for any fixed choice of $\alpha$ there exists an optimization on which this choice is suboptimal. Indeed, there are little-known lower bounds in the OCO setting that hints to the fact that the rate $\mathcal{O}(\frac{\sqrt{\|\x_0-\x_*\|^2+\|\y_0-\y_*\|^2}}{\sqrt{T}})$ might be actually \emph{impossible}~\citep{StreeterM12} and \emph{an additional polylogarithmic price should be paid to estimate the initial condition}, for example trying a logarithmic number of possible settings of $\alpha$ and selecting the best one.
		\item The worst-case choice and the standard choice for this algorithm~\citep{nemirovski2009robust} is to set $\alpha=D$ and obtain a finite-time rate of $\frac{D}{\sqrt{T}}$, without any asymptotic acceleration.
		\item The last choice we have is to set $\alpha$ to any arbitrary number and have an asymptotic rate of $\mathcal{O}(\frac{\frac{\|\x_0-\x_*\|^2+\|\y_0-\y_*\|^2}{\alpha}+\alpha}{\sqrt{T}})$. This is the only viable choice that gives an initialization-dependent rate on all problems such that $\alpha< \sqrt{\|\x_0-\x_*\|^2+\|\y_0-\y_*\|^2}$.
	\end{itemize}
	
	In the next section, we will show how parameter-free algorithms can get initialization-dependent asymptotic rate without any hyperparameter tuning.

	\subsection{Improved Initialization-Dependent Convergence by Parameter-free Algorithms}
	\label{sec:pmalgo}
	
	In this section, we establish an improved initialization-dependent convergence by parameter-free algorithms for solving min-max problems.
	The core idea of this approach is to decouple the primal problem and dual problem in~(\ref{eq:problem}), and by then utilize no-regret algorithms to perform the optimization. This is not a new idea by any means \citep[see, for example,][]{AbernethyLLW18}. However, we need to be particularly careful of the dependency on the initial point. For this reason, we first state a Theorem to use no-regret algorithms in min-max problems that is a simple generalization of Theorem 9 in~\citep{AbernethyLLW18}. In particular, here we care about considering the regret of the OCO algorithms w.r.t. specific points. This will be critical in obtaining initialization-dependent rates.
	
	Suppose to use an OCO algorithm fed with losses $\ell_t(\x)= F(\x,\y_t)$ that produces the iterates $\x_t$ and another OCO algorithm fed with losses $h_t(\y)=-F(\x_t,\y)$ that produces the iterates $\y_t$. Then, we can state the following Theorem (for completeness the proof is in Appendix~\ref{appendix:mainsection}).
	\begin{thm}
		\label{prop:oco-to-minmax}
		Let $\bar{\x}_T=\frac{1}{T}\sum_{t=1}^{T} \x_t$, $\bar{\y}_T=\frac{1}{T}\sum_{t=1}^{T} \y_t$. Then, we have
		\[
		\max_{\y\in\mathcal{Y}} F(\bar{\x}_T, \y) - \min_{\x\in\mathcal{X}} F(\x,\bar{\y}_T)
		\leq \frac{\Regret_T(\x'_T)+\Regret_T(\y'_T)}{T},
		\]
		where $\Regret_T(\y)=\sum_{t=1}^{T}h_t(\y_t)-\sum_{t=1}^{T}h_t(\y)$,  $\Regret_T(\x)=\sum_{t=1}^{T}\ell_t(\x_t)-\sum_{t=1}^{T}\ell_t(\x)$, $\x_T'\in\arg\min_{\x\in\mathcal{X}}F(\x,\bar{\y}_T)$, and $\y_T'\in\arg\max_{\y\in\mathcal{Y}}F(\bar{\x}_T,\y)$.
	\end{thm}
	In words, the above theorem says that we can use two OCO algorithms to minimize the problem in~\eqref{eq:minmax}. In particular, the convergence rate depends on the sum of the regret of the two OCO algorithms evaluated in specific points, divided by the sum of the weights.
	
	\begin{algorithm}[t]
		\caption{Constrained Parameter-free OCO}
		\label{alg:constrained-cb}
		\begin{algorithmic}[1]
			\REQUIRE Convex and compact feasible set $\mathcal{X}$, initial point $\x_0\in\mathcal{X}$, number of iterations $T$
			\STATE $\widetilde{\x}_1=\x_0$
			\STATE $\btheta_t=\boldsymbol{0}$
			\FOR{$t=1,\dots, T$}
			\STATE $\x_t=\Pi_{\mathcal{X}}\left(\widetilde{\x}_t\right)$
			\STATE Receive subgradients $\widehat{\g}_t \in \partial \ell_t(\x_t)$ such that $\|\widehat{\g}_t\|\leq 1$
			\STATE $\g_t=\frac{1}{2}\left(\widehat{\g}_t+\|\widehat{\g}_t\|\cdot \frac{\widetilde{\x}_t-\x_t}{\|\widetilde{\x}_t-\x_t\|}\right)$ (Define $\mathbf{0}/0=\mathbf{0}$)
			\STATE $\widetilde{\x}_{t+1}=\x_0 - \frac{\sum_{i=1}^{t-1} \g_i}{t+1}\left(1 - \sum_{i=1}^{t-1} \langle\widetilde{\x}_{i}, \g_i\rangle\right)$
			\ENDFOR
		\end{algorithmic}
	\end{algorithm}
	
	Inspired by~\cite{OrabonaP16,CutkoskyO18}, we present a parameter-free algorithm for constrained optimization, in Algorithm~\ref{alg:constrained-cb}. For it, we can prove the following regret guarantee in Theorem~\ref{thm:constrained_new_algo} (the proof is in Appendix~\ref{appendix:mainsection}).
	\begin{thm}
		\label{thm:constrained_new_algo}
		Assume $\|\gh_t\|\leq1$ for all $t=1, \dots, T$. Then, for all $\u \in \mathcal{X}$, we have
		\[
		\sum_{t=1}^T (\ell_t(\x_t) - \ell_t(\u))
		\leq 2+2\|\x_0- \u\|\sqrt{ T \ln\left(24 T^2\|\x_0-\u\|^2+1\right)}~.
		\]
	\end{thm}
	As in other parameter-free algorithms, the regret depends on the optimal quantity $\widetilde{\mathcal{O}}(\|\x_0 - \u\|)$, rather than on the worse $\mathcal{O}(\|\x_0-\u\|^2)$ or $D$ of online gradient descent. Finally, the constrained set is dealt with the black-box reductions from unconstrained OCO to constrained OCO~\citep{CutkoskyO18}, that prescribes the use of the projections and surrogate gradients.
	
	Now, using Theorem~\ref{prop:oco-to-minmax}, we can use two instantiations of Algorithm~\ref{alg:constrained-cb} on the primal $\x$ and dual variable $\y$ to solve min-max problems. 
	Putting all together, we obtain Algorithm~\ref{alg:cb-minmax}.
	
	\begin{algorithm}[t]
		\caption{$\text{CB-Min-Max}(\x_0,\y_0,T)$}
		\label{alg:cb-minmax}
		\begin{algorithmic}[1]
			\REQUIRE $\x_0\in\mathcal{X},\y_0\in\mathcal{Y}$
			\STATE $\widetilde{\x}_1=\x_0,\widetilde{\y}_1=\y_0$
			\FOR{$t=1,\dots, T$}
			\STATE $\x_t=\Pi_{\mathcal{X}}\left(\widetilde{\x}_t\right)$, $\y_t=\Pi_{\mathcal{Y}}\left(\widetilde{\y}_t\right)$
			\STATE Receive subgradients $\widehat{\g}_t=(\widehat{\g}_t^{\x},\widehat{\g}_t^{\y})$
			\STATE $\g_t^{\x}=\frac{1}{2}\left(\widehat{\g}_t^{\x}+\|\widehat{\g}_t^{\x}\|\cdot \frac{\widetilde{\x}_t-\x_t}{\|\widetilde{\x}_t-\x_t\|}\right)$ (Define $\mathbf{0}/0=\mathbf{0}$)
			\STATE $\g_t^{\y}=\frac{1}{2}\left(\widehat{\g}_t^{\y}+\|\widehat{\g}_t^{\y}\|\cdot \frac{\widetilde{\y}_t-\y_t}{\|\widetilde{\y}_t-\y_t\|}\right)$ (Define $\mathbf{0}/0=\mathbf{0}$)
			\STATE $\widetilde{\x}_{t+1}=\x_0 - \frac{\sum_{i=1}^{t-1} \g^{\x}_i}{t+1}\left(1 - \sum_{i=1}^{t-1} \langle\widetilde{\x}_{i}, \g^{\x}_i\rangle\right)$
			\STATE $\widetilde{\y}_{t+1}=\y_0 + \frac{\sum_{i=1}^{t-1} \g^{\y}_i}{t+1}\left(1 + \sum_{i=1}^{t-1} \langle\widetilde{\y}_{i}, \g^{\y}_i\rangle\right)$
			\ENDFOR
			\STATE \textbf{Return} $\bar{\x}_T=\frac{1}{T}\sum_{t=1}^{T} \x_t$, $\bar{\y}_T=\frac{1}{T}\sum_{t=1}^{T} \y_t$
		\end{algorithmic}
	\end{algorithm}
	
	Using Theorem~\ref{thm:constrained_new_algo} and Theorem~\ref{prop:oco-to-minmax}, we are able to show the following Corollary (proof is in Appendix~\ref{appendix:mainsection}).
	\begin{cor}
		\label{cor:1}
		Algorithm~\ref{alg:cb-minmax} guarantees that 
		\begin{equation}
			\label{eq:cb1}
			\begin{aligned}
				&\max_{\y\in\mathcal{Y}} F(\bar{\x}_T, \y) - \min_{\x\in\mathcal{X}} F(\x,\bar{\y}_T)\\
				&\quad \leq \frac{4}{T}+ \frac{2\|\x_0-\x'_T\|\sqrt{\ln\left(24 T^2\|\x_0-\x'_T\|^2+1\right)}+2\|\y_0-\y'_T\|\sqrt{\ln\left(24 T^2\|\y_0-\y'_T\|^2+1\right)}}{\sqrt{T}},
			\end{aligned}
		\end{equation}
		where $\x_T'\in\arg\min_{\x\in\mathcal{X}}F(\x,\bar{\y}_T)$, and $\y_T'\in\arg\max_{\y\in\mathcal{Y}}F(\bar{\x}_T,\y)$.
	\end{cor}

	Now, from Corollary~\ref{cor:1} and Theorem~\ref{main:metathm}, we show asymptotic initialization-dependent convergence rates for Algorithm~\ref{alg:cb-minmax}. The proof of Theorem~\ref{thm:main} is included in Appendix~\ref{appendix:mainsection}.
	\begin{thm}
		\label{thm:main}
		Suppose Assumption~\ref{ass:2} holds. Then, Algorithm~\ref{alg:cb-minmax} for any $T$ iterations satisfies
		\begin{equation*}
			\begin{aligned}
				\max_{\y\in\mathcal{Y}}F(\bar{\x}_T,\y)-\min_{\x\in\mathcal{X}}F(\x,\bar{\y}_T)
				\leq\frac{2\left(\|\x_0-\x_*\|+\|\y_0-\y_*\|\right)\sqrt{\ln(24 T^2 D^2+1)}}{\sqrt{T}}+o\left(\frac{1}{\sqrt{T}}\right)~.
			\end{aligned}
		\end{equation*}
	\end{thm}
	
	\textbf{Remark}: Comparing Theorem~\ref{thm:main} and the argument in Section~\ref{sec:GDAalgo}, we can see that asymptotically CB-Min-Max has rate $\frac{\sqrt{\log T}}{\sqrt{T}}$, while gradient descent ascent has rate $\frac{1}{\sqrt{T}}$, so CB-Min-Max is worse than gradient descent ascent asymptotically. However, gradient descent ascent has initialization-dependent rate only if we have the knowledge of the initial condition, while CB-Min-Max always obtains initialization-dependent rate. We will show in Section~\ref{sec:extension1} that this particular feature of CB-Min-Max is important to derive non-asymptotic fast rates on another class of min-max problems. 
	
	\subsection{Discussion on Possible Alternative Approaches}
	\label{sec:discussion}
	We have shown how to utilize the strict-convexity-strict-concavity to design algorithms with initialization-dependent rates. However, one may wonder about alternative approaches to achieve the same goal. For example, adding arbitrary small amount of $\ell_2$ regularization to both primal and dual variables will induce strict-convexity-strict-concavity with negligible bias. However, note that any arbitrarily small bias would not be negligible when the number of iterations $T$ gets large. 
	
	In alternative, one could think to trade off the bias with the rate. But, this approach seems to be able to recover only a $DG/\sqrt{T}$ rate ($D$ is the domain size, $G$ is the gradient upper bound), so it would not be initialization-dependent. Let us consider the function $F(\x,\y)$ with $\|\x\|\leq D$ and $\|\y\|\leq D$. In particular, denote the primal function by $p(\x)=\max_{\y\in\mathcal{Y}}F(\x,\y)$, the dual function by $d(\y)=\min_{\x\in\mathcal{X}}F(\x,\y)$. Define the regularized primal function by $p_{\mu}(\x)=\max_{\y\in\mathcal{Y}}F(\x,\y)+\mu\|\x\|^2$, and the regularized dual function by $d_{\mu}(\y)=\min_{x\in\mathcal{X}}F(\x,\y)-\mu\|\y\|^2$, where $\mu>0$ is a very small constant. Since the domain is bounded (assume both $\mathcal{X}$ and $\mathcal{Y}$ have diameter $D$), we have $|d(\y)-d_{\mu}(\y)|\leq \mu D^2$ and $|p(\x)-p_{\mu}(\x)|\leq \mu D^2$ for any $\x\in\mathcal{X}$ and any $\y\in\mathcal{Y}$. By using the state-of-the-art solver for the strongly-convex-strongly-concave subproblem, we have $p_{\mu}(\x_T)-d_{\mu}(\y_T)\leq \frac{G^2}{\mu T}$, and hence for the original problem, the duality gap is $p(\x_T)-d(\y_T)\leq \frac{G^2}{\mu T} + \mu D^2$. Choosing the optimal $\mu=\frac{G}{D\sqrt{T}}$, the rate becomes $DG/\sqrt{T}$. Please note that the error term $\mu D^2$ explicitly depends on the size of the domain, so it seems difficult to get around the domain diameter by the approach of adding a small $\ell_2$ regularization.
	In contrast, directly assuming strict-convexity-strict-concavity we can get initialization-dependent asymptotic rates.
	
	\section{Fast Rates under Growth Condition and H{\"o}lder Continuous Solution Mapping}
	\label{sec:extension1}
	
	In this section, we consider an extension of Algorithm~\ref{alg:cb-minmax} when the function satisfies a growth condition and a H{\"o}lder continuous solution mapping, in which we establish improved rates.

	
	
	
	%
	
	
	We consider the following assumptions.
	\begin{ass}[Growth condition]
		\label{ass:3}
		(i) $\|\x-\x_*\|\leq c_1(F(\x,\y_*)-F(\x_*,\y_*))^{\theta_1}$ and $\|\y-\y_*\|\leq c_2(F(\x_*,\y_*)-F(\x_*,\y))^{\theta_1}$, where $c_1>0$, $c_2>0$, $0< \theta_1\leq 1$, $(\x_*,\y_*)$ is the optimal solution of the original problem~(\ref{eq:problem}).
	\end{ass}
	
	\begin{ass}[H{\"o}lder continuous solution mapping]
		\label{ass:4}
		Define $H_1(\y)=\arg\min_{\x\in\mathcal{X}} F(\x,\y)$, $H_2(\x)=\arg\max_{\y\in\mathcal{Y}}F(\x,\y)$.  $\left\|H_1(\y_1)-H_1(\y_2)\right\|\leq L_{\y}\left\|\y_1-\y_2\right\|^{\theta_2}$ for any $\y_1,\y_2\in\mathcal{Y}$, and $\left\|H_2(\x_1)-H_2(\x_2)\right\|\leq L_{\x}\left\|\x_1-\x_2\right\|^{\theta_2}$ for any $\x_1,\x_2\in\mathcal{X}$, where $0<\theta_2\leq 1$.
	\end{ass}
	
	\textbf{Remark}: Assumption~\ref{ass:3} is a generalization of growth condition in minimization problems~\citep{li2013global,yang2018rsg}. Assumption~\ref{ass:4} is closely related to the Aubin Property~\citep{aubin1984lipschitz}, which is usually employed to characterize the Lipschitz behavior of solution set for convex optimization problems. Examples satisfying this assumption can be found in~\citep{dontchev2009implicit} (e.g., Example 3B.6, Exercise 3C.5). We want to emphasize that it covers a wide range of min-max problems. For example, strongly-convex-strongly-concave min-max problems satisfy Assumption~\ref{ass:3} and~\ref{ass:4} with $\theta_1=1/2$ and $\theta_2=1$ ($\theta_1=1/2$ is due to the definition of strong-convexity-strong-concavity, $\theta_2=1$ is shown in Lemma 2.2 of~\cite{ghadimi2018approximation}). Other examples satisfying Assumption~\ref{ass:2}, ~\ref{ass:3} and~\ref{ass:4} can be found in Appendix~\ref{app:examples}.
	
	%
	\begin{algorithm}[t]
		\caption{Restart-CB-Min-Max}
		\label{alg:cb-minmaxrestart}
		\begin{algorithmic}[1]
			\REQUIRE $\widehat{\x}_0$, $\widehat{\y}_0$
			\FOR{$s=1,\dots,S$}
			\STATE $(\widehat{\x}_s, \widehat{\y}_s)=\text{CB-Min-Max}( \widehat{\x}_{s-1},\widehat{\y}_{s-1}, T_s)$
			\ENDFOR
			\STATE \textbf{Return} $\widehat{\x}_S, \widehat{\y}_S$
		\end{algorithmic}
	\end{algorithm}
	

	Under Assumption~\ref{ass:3} and~\ref{ass:4}, we can design a restart version of the algorithm, which  is presented in Algorithm~\ref{alg:cb-minmaxrestart}. We will prove that Algorithm~\ref{alg:cb-minmaxrestart} enjoys faster non-asymptotic convergence rates. 
	
	We first provide convergence guarantee for one-stage of Algorithm~\ref{alg:cb-minmaxrestart}, which is presented in Theorem~\ref{fastrate:thm1} (the proof is included in Appendix~\ref{appendix:fastrate}).
	\begin{thm}
		\label{fastrate:thm1}
		Suppose Assumptions~\ref{ass:2}, \ref{ass:3} and~\ref{ass:4} hold. Running Algorithm~\ref{alg:cb-minmax} for $T$ iterations yields
		\begin{equation}
			\begin{aligned}
				\max_{\y\in\mathcal{Y}} F(\bar{\x}_T, \y) - \min_{\x\in\mathcal{X}} F(\x,\bar{\y}_T)
				\leq \mathcal{O}\left(\frac{1}{T}+\frac{1}{T^{\frac{\theta+1}{2}}}+\frac{\text{ObjGap}^{\theta_1}(\x_0,\y_0)\ln T}{\sqrt{T}}\right),
			\end{aligned}
		\end{equation}
		where $\text{ObjGap}(\x_0,\y_0)\triangleq F(\x_0,\y_*)-F(\x_*,\y_*)+F(\x_*,\y_*)-F(\x_*,\y_0)$, and $\theta\triangleq\theta_1\theta_2$.
	\end{thm}

	Based on Theorem~\ref{fastrate:thm1}, we then introduce the Theorem~\ref{fastrate:thm2}, which illustrates the improved rate achieved by Algorithm~\ref{alg:cb-minmaxrestart} when $\theta_1>0$ and $\theta_2>0$. The proof of Theorem~\ref{fastrate:thm2} is included in Appendix~\ref{appendix:fastrate}.
	\begin{thm}
		\label{fastrate:thm2}
		Suppose Assumptions~\ref{ass:2}, \ref{ass:3} and~\ref{ass:4} hold.  Define $\theta\triangleq\theta_1\theta_2$. Assume $\text{ObjGap}(\widehat{\x}_0,\widehat{\y}_0)\triangleq F(\widehat{\x}_0,\y_*)-F(\x_*,\y_*)+F(\x_*,\hat{\y}_0)-F(\x_*,\y_*)\leq \epsilon_0$. Define $\epsilon_s=\epsilon_0/2^s$. Run Algorithm~\ref{alg:cb-minmaxrestart} for $S=\lfloor\log\left(\epsilon_0/\epsilon\right)\rfloor$ stages with $T_s=\widetilde{\mathcal{O}}(\max(\frac{1}{\epsilon_s^{2-2\theta_1}},\frac{1}{\epsilon_s^{2/(1+\theta)}}))$. Then, we can guarantee that $\max_{\y\in\mathcal{Y}} F(\widehat{\x}_S, \y) - \min_{\x\in\mathcal{X}} F(\x,\widehat{\y}_S)\leq\epsilon$ and the total iteration complexity is  $\widetilde{\mathcal{O}}\left(\max\left(\frac{1}{\epsilon^{2(1-\theta_1)}}, \frac{1}{\epsilon^{2/(1+\theta)}}\right)\right)$.
	\end{thm}
	
	\paragraph{Remark} When the function is strongly-convex-strongly-concave, the state-of-the-art complexity is $O(1/\epsilon)$ while the complexity of our Algorithm~\ref{alg:cb-minmaxrestart} is $\widetilde{O}(1/\epsilon^{4/3})$ ($\theta_1=1/2$, $\theta_2=1$, $\theta=\theta_1\theta_2=1/2$), and hence our algorithm does not achieve state-of-the-art in this particular case. However, our analysis captures a more general class of min-max problems, and the complexity is strictly better than $O(1/\epsilon^2)$ as long as $\theta_1>0$ and $\theta_2>0$. Intuitively speaking, our algorithm can take advantage of the growth condition of the loss landscape and enjoy faster convergence than the standard algorithm (e.g., GDA) which is oblivious of such favorable structure. To the best of our knowledge, leveraging function growth condition for min-max problems is novel and does not appear in the previous literature.
	
	\begin{figure}[t]
		\centering
		\subfigure[Initialization: $(0.1, 0.1)$]{
			\includegraphics[scale=0.3]{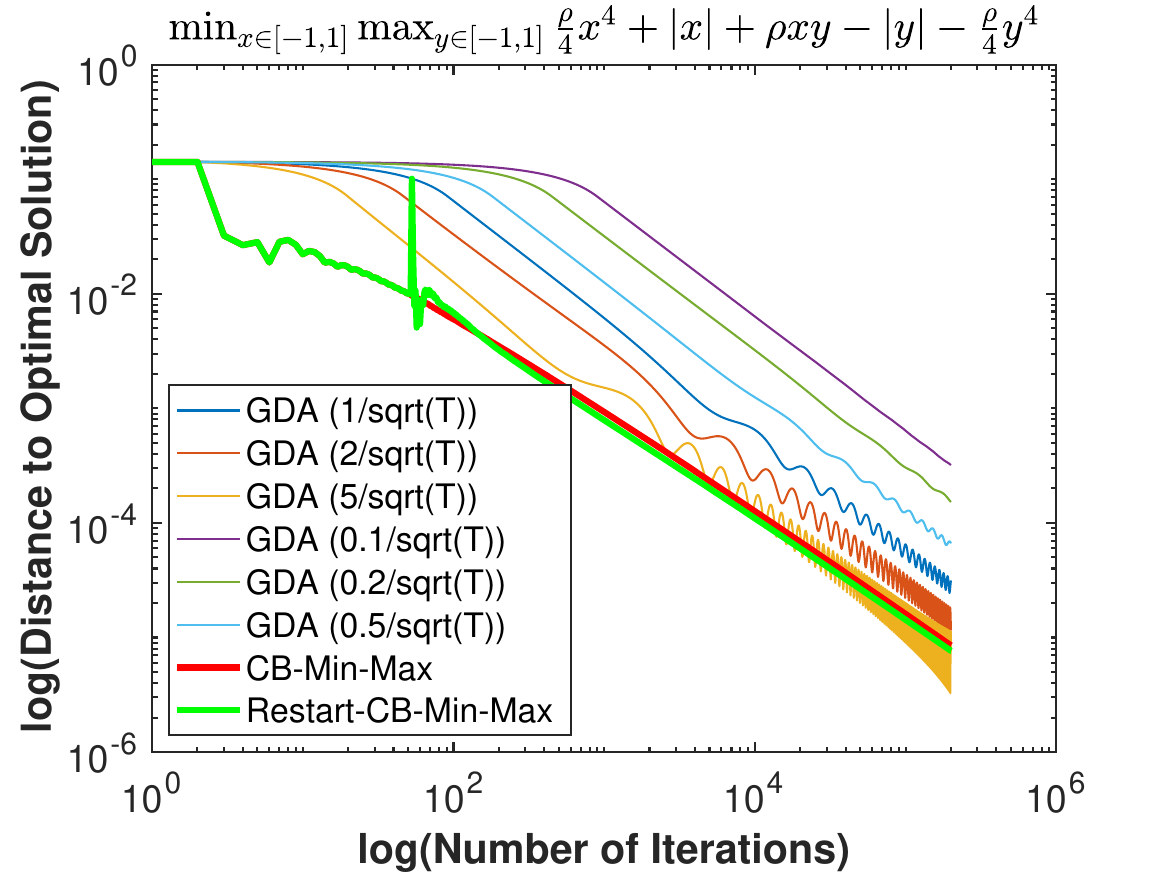}
		}
		\subfigure[Initialization: $(0.05. 0.05)$]{
			\includegraphics[scale=0.3]{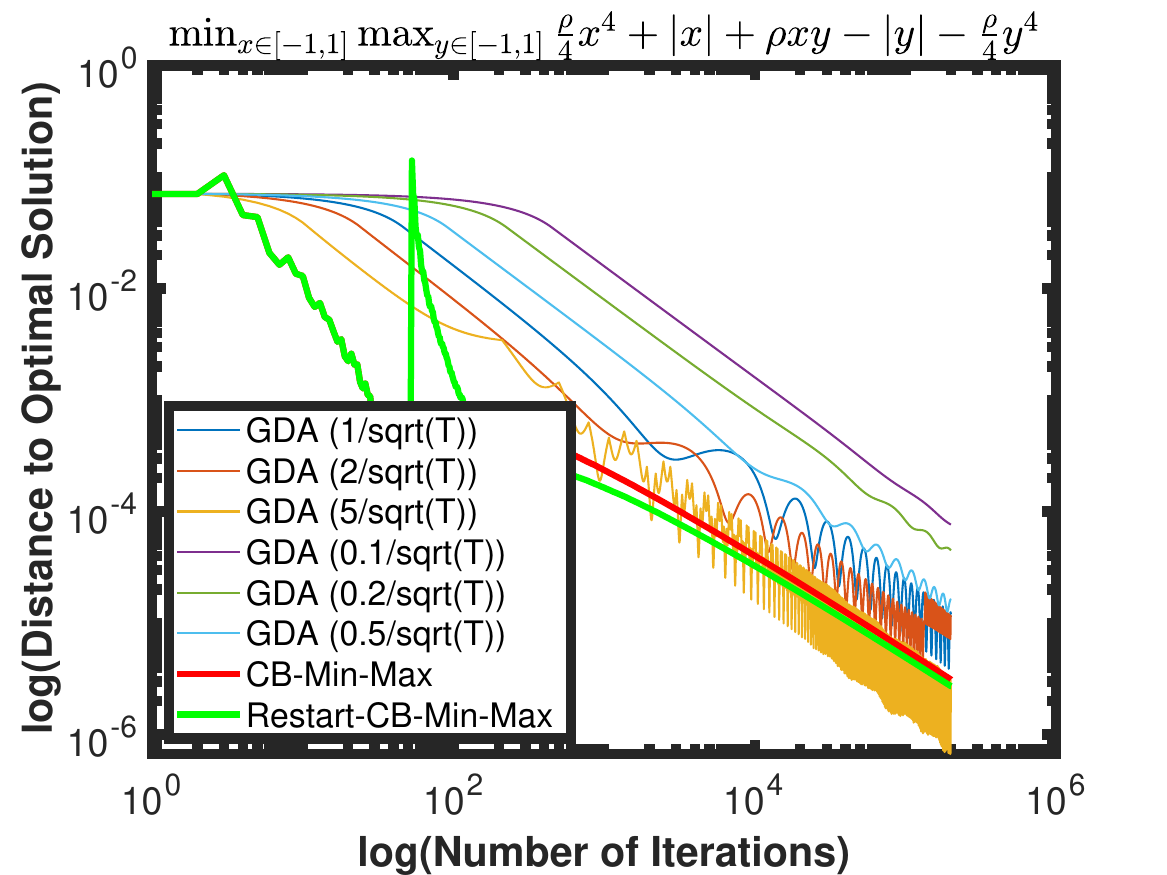}
		}
		\caption{Comparison of different algorithms for the synthetic problem~(\ref{prob:synthetic}). GDA($\cdot$) stands for stands for gradient descent ascent (with learning rate), where $T$ is total number of iterations. CB-Min-Max stands for Algorithm~\ref{alg:cb-minmax}. Restart-CB-Min-Max stands for Algorithm~\ref{alg:cb-minmaxrestart}.} 
		\label{fig:synthetic}
	\end{figure}
	\section{Experiments}
	\label{sec:experiment}
	In this section, we conduct experiments to justify the effectiveness of our proposed algorithm. 
	We first consider a synthetic problem. Another problem is distributionally robust optimization (DRO), which requires an algorithm which can deal with non-Euclidean space such as probability simplex. For lack of space, the algorithm, theory and experiments about DRO are included in Appendix~\ref{appendix:extension2}.
	
	\noindent\textbf{Synthetic Problem} We consider the following synthetic min-max problem:
	\begin{equation}
		\label{prob:synthetic}
		\min_{x\in\mathcal{X}}\max_{y\in\mathcal{Y}}F(x,y) := \frac{\rho}{4}x^4+|x|+\rho xy-|y|-\frac{\rho}{4}y^4,
	\end{equation}
	where $\mathcal{X}=\{x\,|\, |x|\leq R_x\}$ and $\mathcal{Y}=\{y\,|\, |y|\leq R_y\}$, $\rho, R_x, R_y$ are all positive constants. One can show that this problem satisfies Assumptions~\ref{ass:2},~\ref{ass:3},~\ref{ass:4} simultaneously with $\theta_1=1$ and $\theta_2=1/3$ (the proof of this claim is included in Appendix~\ref{app:examples}). 
	In addition, the optimal solution of~(\ref{prob:synthetic}) is $(0,0)$.

	In our experiment, we set $R_x=R_y=1$, $\rho=0.5$. We consider two different initializations $(x_0,y_0)=(0.1,0.1)$ and $(x_0,y_0)=(0.05,0.05)$ to test our algorithms and other baselines.
	We choose the same initial point for both primal-dual gradient method with and CB-Min-Max, and report the distance to the optimal solution versus the number of iterations. The learning rate of the gradient descent ascent method (GDA) is set to be $\frac{c}{G\sqrt{T}}$, where $c$ is tuned from $\{1,2,5,0.1,0.2,0.5\}$, $G$ is the gradient's upper bound and $T$ is the number of iterations ($T=2\times 10^5$). For CB-Min-Max and Restart-CB-Min-Max, we set all gradients in Algorithm~\ref{alg:cb-minmax} to be scaled by its upper bound $G$ to make sure that the scaled gradient has norm smaller than $1$. For Restart-CB-Min-Max, the algorithm restarts at the $ar^{s}$-th iteration, where $a=50$, $r=10^4$, $s=0,1,\ldots$. We plot the loglog curves for distance to the optimal solution versus the number of iterations for two different initializations, which are presented in Figure~\ref{fig:synthetic} (a) and (b).
	
	From Figure~\ref{fig:synthetic}, we can see that the algorithms CB-Min-Max and Restart-CB-Min-Max are better than the GDA with theoretically optimal learning rate $(2/\sqrt{T})$, and are comparable to the GDA with best-tuned learning rate $(5/\sqrt{T})$. Note that $2/\sqrt{T}$ learning rate for GDA is the theoretically best since the domain's size is $D=2$, while the $5/\sqrt{T}$ learning rate gives the best performance as shown in Figure~\ref{fig:synthetic}. Another interesting observation is that Restart-CB-Min-Max is slightly better than CB-Min-Max at the very end, and it fluctuates a little bit in the middle. The reason is due to the restart. When the restart happens, the algorithm enters into a new stage and the update becomes a bit aggressive at the very beginning of this stage, and then it quickly converges to a good solution. Finally, when comparing (a) and (b) in Figure~\ref{fig:synthetic}, we can see that our algorithms indeed have better performance when the initialization is better, and this is also consistent with our theory. 
	
	We want to emphasize that the problem instance~(\ref{prob:synthetic}) is 1 dimension and nonsmooth, so GDA with best-tuned learning rate is the strongest baseline in this case. Other popular algorithms are not applicable or not better than the well-tuned GDA. For example, extragradient method~\citep{korpelevich1976extragradient} requires the function to be smooth and hence is not applicable to nonsmooth problems such as~(\ref{prob:synthetic}), and coordinate-wise adaptive gradient algorithm~\citep{bach2019universal} is the same as the nonadaptive version since (\ref{prob:synthetic}) is of dimension 1.
	
	
	\section{Conclusion}
	In this paper, we consider how to get initialization-dependent convergence rate of first-order algorithms for convex-concave min-max problems. We first identify a condition (i.e., strict-convexity-strict-concavity) and show that it is sufficient to get the initialization-dependent rate. We also take advantage of a parameter-free algorithm with this initialization-dependent rate to design a better algorithm with fast non-asymptotic convergence rate, for min-max problems with a growth condition and H\"{o}lder continuous solution mapping. 
	Experimental results show the superior performance of the proposed algorithms. In the future, we plan to study the lower bound for parameter-free algorithms in min-max problems.
	
\acks{This material is based upon work supported by the National Science Foundation under the grants no. 1908111 ``AF: Small:
Collaborative Research: New Representations for Learning Algorithms and Secure Computation'' and no. 2046096 ``CAREER: Parameter-free Optimization Algorithms for Machine Learning''.}
	
	\bibliography{ref}
	\bibliographystyle{plain}
	\appendix
	\appendix
\newpage
\section*{Appendix}
\section{Proofs in Section~\ref{sec:initialization-rate}}
\label{appendix:mainsection}
\subsection{Proof of Theorem~\ref{prop:oco-to-minmax}}
\begin{proof}

Note that $\ell_t(\x)=F(\x,\y_t)$, and $h_t(\y)=-F(\x_t,\y)$. By Jensen's inequality, we have
	\begin{align*}
	F(&\bar{\x}_T, \y) - F(\x,\bar{\y}_T)\leq \frac{1}{T}\sum_{t=1}^{T}F(\x_t,\y)- \frac{1}{T} \sum_{t=1}^{T}F(\x,\y_t)\\
	&=\frac{1}{T}\sum_{t=1}^{T}F(\x_t,\y)-\frac{1}{T}\sum_{t=1}^{T}F(\x_t,\y_t)
 +\frac{1}{T}\sum_{t=1}^{T}F(\x_t,\y_t)-\frac{1}{T} \sum_{t=1}^{T}F(\x,\y_t)\\
	&=\frac{1}{T} \sum_{t=1}^{T} (h_t(\y_t)-h_t(\y))+\frac{1}{T}\sum_{t=1}^{T}(\ell_t(\x_t)-\ell_t(\x))~.
\end{align*}
Taking $\x=\x'_T\in\arg\min_{\x\in\mathcal{X}} F(\x,\bar{\y}_T)$ and $\y=\y'_T\in\arg\max_{\y\in\mathcal{Y}} F(\bar{\x}_T,\y)$, we get the stated result.
\end{proof}

\subsection{Proof of Theorem \ref{thm:constrained_new_algo}}
\begin{proof}
Define $\widetilde{\ell}_t(\x)=\frac{1}{2}\left(\left\langle\widehat{\g}_t,\x\right\rangle+\|\widehat{\g}_t\|\|\x-\Pi_{\mathcal{X}}(\x)\|\right)$. Noting that $\g_t\in\partial\widetilde{\ell}_t(\widetilde{\x}_t)$, so Algorithm~\ref{alg:constrained-cb} is a coin-betting algorithm for minimizing regret defined by the function $\widetilde{\ell}$ over the sequence of points $\{\widetilde{\x}_t\}$. By Corollary 5 in~\cite{CutkoskyO18}, we know that
\begin{equation*}
\sum_{t=1}^{T}\left(\widetilde{\ell}_t(\widetilde{\x}_t)-\widetilde{\ell}_t(\u)\right)\leq 1+\|\x_0-\u\|\sqrt{T\ln(24T^2\|\x_0-\u\|^2+1)}~.
\end{equation*}
By the black-box reduction argument (Theorem 3 in~\cite{CutkoskyO18}), we have
\begin{equation*}
\sum_{t=1}^{T}\left(\ell_t(\x_t)-\ell_t(\u)\right)\leq	2\left(\sum_{t=1}^{T}\left(\widetilde{\ell}_t(\widetilde{\x}_t)-\widetilde{\ell}_t(\u)\right)\right)\leq 2+2\|\x_0-\u\|\sqrt{T\ln(24T^2\|\x_0-\u\|^2+1)},
\end{equation*}
that completes the proof.
\end{proof}

\subsection{Proof of Corollary~\ref{cor:1}}
\begin{proof}
Using Theorem~\ref{thm:constrained_new_algo} twice (applying it with $\ell_t=F(\cdot,\y_t)$ and $\u=\x'_T\in\arg\min_{\x\in\mathcal{X}}F(\x,\bar{\y}_T)$, and applying it with $h_t=-F(\x_t,\cdot)$ with $\u=\y'_T\in\arg\max_{\y\in\mathcal{Y}}F(\bar{\x}_T,\y)$) and Theorem~\ref{prop:oco-to-minmax} concludes the proof.
\end{proof}

\subsection{Proof of Theorem~\ref{thm:main}}
\begin{proof}
	We can see that~(\ref{eq:cb1}) satisfies the premises of Theorem~\ref{main:metathm} with $A(x)=x$, $B(T)=\frac{\sqrt{4\ln(24 T^2 D^2+1)}}{\sqrt{T}}$, $C(T)=4/\sqrt{T}$. Note that $A(x+y)\leq A(x)+A(y)$ for any $0\leq x\leq D, 0\leq y\leq D$, then invoking Theorem~\ref{main:metathm}, we have
	\begin{equation*}
		\begin{aligned}
			\max_{\y\in\mathcal{Y}}F(\bar{\x}_T,\y)-\min_{\x\in\mathcal{X}}F(\x,\bar{\y}_T) 
			& \leq \left[A(\|\x_0-\x_*\|)B(T)+ A(\|\y_0-\y_*\|)B(T)\right]+R(T)+\frac{4}{T}\\
			& \leq \frac{2\left(\|\x_0-\x_*\|+\|\y-\y_*\|\right)\sqrt{\ln(24 T^2 D^2+1)}}{\sqrt{T}}+o\left(\frac{1}{\sqrt{T}}\right).
		\end{aligned}
	\end{equation*}
\end{proof}

\section{Proof in Section~\ref{sec:extension1}}
\label{appendix:fastrate}
\subsection{Proof of Theorem~\ref{fastrate:thm1}}

\begin{proof}
	By Corollary~\ref{cor:1}, we have
	\begin{equation}
		\label{growth:eq1}
		\begin{aligned}
			\max_{\y\in\mathcal{Y}} &F(\bar{\x}_T, \y) - \min_{\x\in\mathcal{X}} F(\x,\bar{\y}_T)\\
			&\leq \frac{4}{T}+\frac{\|\x_0-\x'_T\|\sqrt{\ln\left(1+24\|\x_0-\x'_T\|^2T^2\right)}}{\sqrt{T}}+\frac{\|\y_0-\y'_T\|\sqrt{\ln\left(1+24\|\y_0-\y'_T\|^2T^2\right)}}{\sqrt{T}},
		\end{aligned}
	\end{equation}
	where 
	$\x'_T=\arg\min_{\x\in\mathcal{X}}F(\x,\bar{\y}_T)$, $\y'_T=\arg\min_{\y\in\mathcal{Y}}F(\bar{\x}_T,\y)$.
	By the growth condition in Assumption~\ref{ass:3}, we know that 
	\begin{equation}
		\label{growth:eq2}
		\|\x_0-\x_*\|\leq c_1 \left(F(\x_0,\y_*)-F(\x_*,\y_*)\right)^{\theta_1}.
	\end{equation}
	By the H{\"o}lder continuity of the solution mapping (Assumption~\ref{ass:4}), we have $\|\x_*-\x'_T\|=\|\arg\min_{\x} F(\x,\y_*)-\arg\min_{\x} F(\x,\bar{\y}_T)\|\leq L_{\y}\left\|\bar{\y}_T-\y_*\right\|^{\theta_2}$. Note that we have $\left\|\bar{\y}_T-\y_*\right\|\leq c_2(F(\x_*,\y_*)-F(\x_*,\bar{\y}_T))^{\theta_1}$, so we have
	\begin{equation}
		\label{growth:eq3}
		\begin{aligned}
			\|\x_*-\x'_T\|
			&\leq c_2^{\theta_2}L_{\y}\left(F(\x_*,\y_*)-F(\x_*,\bar{\y}_T)\right)^{\theta_1\theta_2}
			\leq c_{\y}\left(F(\x_*,\y_*)-\min_{\x}F(\x,\bar{\y}_T)\right)^{\theta} \\
			&\leq O\left(\frac{c_{\y}D_{\mathcal{Y}}^{\theta}\ln^{\frac{\theta}{2}}(D_{\mathcal{Y}} T)}{T^{\theta/2}}\right)~.
		\end{aligned}
	\end{equation}
	where $c_{\y}=c_2^{\theta_2}L_{\y}$, $\theta=\theta_1\theta_2$, and the last inequality holds due to the convergence guarantee established in Corollary~\ref{cor:1} .
	
	A parallel argument in terms of $\y$ gives
	\begin{equation}
		\label{growth:eq10}
		\|\y_0-\y_*\|\leq c_2 (F(\x_*,\y_*)-F(\x_*,\y_0))^{\theta_1},
	\end{equation}
	and
	\begin{equation}
		\label{growth:eq11}
		\begin{aligned}
			\|\y_*-\y'_T\|
			&\leq c_1^{\theta_1}L_{\x}\left(F(\bar{\x}_T,\y_*)-F(\x_*,\y_*)\right)^{\theta_1\theta_2}
			\leq c_{\x}\left(F(\bar{\x}_T,\y_*)-F(\x_*,\y_*)\right)^{\theta}\\
			&\leq O\left(\frac{c_{\x}D_{\mathcal{X}}^{\theta}\ln^{\frac{\theta}{2}}(D_{\mathcal{X}} T)}{T^{\theta/2}}\right),
		\end{aligned}
	\end{equation}
	where $c_{\x}=c_{1}^{\theta_1}L_{\x}$. 
	
	Due to triangle inequality and~\eqref{growth:eq2} and~\eqref{growth:eq3}, we have
	\begin{equation}
		\|\x_0-\x’_T\| \leq \|\x_0-\x_*\| + \|\x_*-\x’_T\| \leq c_1( F(\x_0,\y_*)-F(\x_*,\y_*) )^{\theta_1}+O(\ln T/T^{\theta/2}).
	\end{equation}
Similarly, we have
\begin{equation}
\|\y_0-\y’_T\| \leq \|\y_0-\y_*\| + \|\y_*-\y’_T\| \leq c_2( F(\x_*,\y_*) - F(\x_*,\y_0) )^{\theta_1}+O(\ln T/T^{\theta/2}).
\end{equation}
Hence, we have
\begin{equation}
	\label{eq:new}
	\|\x_0-\x’_T\| +\|\y_0-\y’_T\| \leq O\left(\text{ObjGap}^{\theta_1}(\x_0,\y_0)\right)+O(\ln T/T^{\theta/2}).
\end{equation}
	Combining~(\ref{growth:eq1}) with~\eqref{eq:new}, we have
	
	\begin{equation}
		\begin{aligned}
			\max_{\y\in\mathcal{Y}} F(\bar{\x}_T, \y) - \min_{\x\in\mathcal{X}} F(\x,\bar{\y}_T)
			\leq O\left(\frac{1}{T}+\frac{\ln T}{T^{\frac{\theta+1}{2}}}+\frac{\text{ObjGap}^{\theta_1}(\x_0,\y_0)\ln T}{\sqrt{T}}\right),
		\end{aligned}
	\end{equation}
	where $\text{ObjGap}(\x_0,\y_0)=F(\x_0,\y_*)-F(\x_*,\y_*)+F(\x_*,\y_*)-F(\x_*,\y_0)$.
\end{proof}

\subsection{Proof of Theorem~\ref{fastrate:thm2}}
\begin{proof}
	Define $\text{DualityGap}(\widehat{\x}_0,\widehat{\y}_0)=\max_{\y\in\mathcal{Y}} F(\widehat{\x}_0, \y)-\min_{\x\in\mathcal{X}}F(\x,\widehat{\y}_0)$. We know that $\text{ObjGap}(\widehat{\x}_0,\hat{\y}_0)\leq \text{DualityGap}(\widehat{\x}_0,\widehat{\y}_0)$. By invoking the subroutine Algorithm~\ref{alg:cb-minmax} to run $T_0=\widetilde{O}\left(\max\left(\frac{1}{\epsilon_0^{2-2\theta_1}},\frac{1}{\epsilon_0^{2/(1+\theta)}}\right)\right)$ iterations, we know that the duality gap at the new point will be decreased to $\epsilon_1=\epsilon_0/2$. Then the Algorithm~\ref{alg:cb-minmaxrestart} restarts by setting the new point as the initial point, and then invokes the subroutine Algorithm~\ref{alg:cb-minmax} to run $T_1=\widetilde{O}\left(\max\left(\frac{1}{\epsilon_1^{2-2\theta_1}},\frac{1}{\epsilon_1^{2/(1+\theta)}}\right)\right)$ number of iterations, and then it restarts again. Algorithm~\ref{alg:cb-minmaxrestart} repeats this process under it reaches $\epsilon$-duality gap. We know that we have $S=\lfloor\log\left(\epsilon_0/\epsilon\right)\rfloor$ stages and hence the total complexity is $\sum_{s=0}^{S}\widetilde{O}\left(\max\left(\frac{1}{\epsilon_s^{2-2\theta_1}},\frac{1}{\epsilon_s^{2/(1+\theta)}}\right)\right)=\widetilde{O}\left(\max\left(\frac{1}{\epsilon^{2(1-\theta_1)}},\frac{1}{\epsilon^{2/(1+\theta)}}\right)\right)$.
\end{proof}

\section{Non-Euclidean Space: Simplex Setup}
\label{appendix:extension2}

Min-max optimization in Non-Euclidean spaces is an important topic, which has broad applications in machine learning (e.g., distributionally robust optimization). In this section, we focus on the simplex setup by considering the following problem
\[
\min_{\x\in\X}\max_{\p\in\Delta_n} \ F(\x,\p),
\]
where 
\[
\Delta_n = \left\{(p_1,p_2,\dots,p_n)\in\R^n \,\vert\, 0\leq p_i\leq 1, \sum_{i=1}^{n}p_i=1\right\}~.
\]

Our algorithm design shares the similar spirit as in the Euclidean case (Algorithm~\ref{alg:cb-minmax}), in which we split the original problem into the primal and dual problems, and employ corresponding coin-betting algorithms for solving them simultaneously. The algorithm is presented in Algorithm~\ref{alg:constrained-cb-simplex-min-max}, which is an synthesis of Algorithm~\ref{alg:constrained-cb} and Algorithm~\ref{alg:constrained-cb-simplex}. It is worth mentioning that Algorithm~\ref{alg:constrained-cb-simplex} is adapted from Algorithm 2 in~\cite{OrabonaP16}, which is a parameter-free coin-betting algorithm in the probability simplex.

The main difficulty of analyzing parameter-free algorithms for min-max problems in a non-Euclidean space is that we have to use a different distance-generating function, and generalize the proofs done in Euclidean space in Section~\ref{sec:initialization-rate} to non-Euclidean spaces. 

We first present two key Lemmas (Lemma~\ref{lem:trianglesimplex} and Lemma~\ref{lem:regretsimplex})) which are useful for our analysis. The proofs are included in Appendix~\ref{appendix:fast-rate-noneuclidean}. Lemma~\ref{lem:trianglesimplex} plays the role of the triangle inequality for the KL-divergence, and Lemma~\ref{lem:regretsimplex} provides the non-Euclidean variant of Theorem~\ref{thm:constrained_new_algo}.

\begin{algorithm}[t]
	\caption{Simplex-Constrained Coin-betting OCO}
	\label{alg:constrained-cb-simplex}
	\begin{algorithmic}[1]
		\REQUIRE Simplex $\Delta_n\in\R^n$, prior distribution $\p_0\in\Delta_n$
		\FOR{$t=1,\dots,T$}
		\STATE $\w_{t,i}=\frac{\sum_{j=1}^{t-1}\widetilde{\g}_{j,i}}{t}(1+\sum_{j=1}^{t-1}\widetilde{\g}_{j,i}\w_{j,i})$, $i=1,\dots,n$
		\STATE $\widehat{\p}_{t,i}=\p_{0,i}\max(\w_{t,i},0)$, $i=1,\dots,n$
		\STATE $\p_t=\frac{\widehat{\p_t}}{\|\widehat{\p}_t\|_1}$ if $\|\widehat{\p}_t\|_1\neq 0$ else $\p_t=\p_0$
		\STATE Receive subgradient $\widehat{\g}_t^{\p}$
		\STATE $\widetilde{\g}_{t,i}=\g_{t,i}^{\p}-\langle\g_t^\p,\p_t\rangle$ if $\w_{t,i}>0$ else $\widetilde{\g}_{t,i}=\max(\g_{t,i}^{\p}-\langle\g_t^\p,\p_t\rangle, 0)$, $i=1,\dots,n$
		\ENDFOR
		\STATE \textbf{Return} $\bar{\p}_T=\frac{1}{T}\sum_{t=1}^{T}\p_t$
	\end{algorithmic}
\end{algorithm}

\begin{algorithm}[t]
	\caption{CB-Min-Max-Simplex}
	\label{alg:constrained-cb-simplex-min-max}
	\begin{algorithmic}[1]
		\REQUIRE $\x_0\in\mathcal{X}$, prior distribution $\p_0\in\Delta_n$
		\STATE $\widetilde{\x}_0=\x_0$
		\FOR{$t=0,\dots,T$}
		\STATE $\x_t = \Pi_{\mathcal{X}}(\widetilde{\x}_t)$
		\STATE Receive Subgradient $\widehat{\g}_t^{\x}$
		\STATE $\g_t^{\x}=\frac{1}{2}\left(\widehat{\g}_t^{\x}+\|\widehat{\g}_t^{\x}\|\cdot \frac{\widetilde{\x}_t-\x_t}{\|\widetilde{\x}_t-\x_t\|}\right)$ (Define $\mathbf{0}/0=\mathbf{0}$)
		\STATE $\w_{t,i}=\frac{\sum_{j=1}^{t-1}\widetilde{\g}_{j,i}}{t}(1+\sum_{j=1}^{t-1}\widetilde{\g}_{j,i}\w_{j,i})$, $i=1,\dots,n$
		\STATE $\widehat{\p}_{t,i}=\p_{0,i}\max(\w_{t,i},0)$, $i=1,\dots,n$
		\STATE $\p_t=\frac{\widehat{\p_t}}{\|\widehat{\p}_t\|_1}$ if $\|\widehat{\p}_t\|_1\neq 0$ else $\p_t=\p_0$
		\STATE Receive subgradient $\widehat{\g}_t^{\p}$
		\STATE $\widetilde{\g}_{t,i}=\widehat{\g}_{t,i}^{\p}-\langle\widehat{\g}_t^\p,\p_t\rangle$ if $\w_{t,i}>0$ else $\widetilde{\g}_{t,i}=\max(\widehat{\g}_{t,i}^{\p}-\langle\widehat{\g}_t^\p,\p_t\rangle, 0)$, $i=1,\dots,n$
		\STATE $\widetilde{\x}_{t+1}=\x_0-\frac{\sum_{j=1}^{t}\g_{t}^{\x}}{t+1}(1-\sum_{j=1}^{t}\langle \g_{j}^{\x},\widetilde{\x}_{j}\rangle)$
		\ENDFOR
		\STATE \textbf{Return} $\bar{\x}_T=\frac{1}{T}\sum_{t=1}^{T}\x_t$, $\bar{\p}_T=\frac{1}{T}\sum_{t=1}^{T}\p_t$
	\end{algorithmic}
\end{algorithm}

\begin{lem}
	\label{lem:trianglesimplex}
	Assume $\bpi$ is a discrete probability distribution over $\R^n$ with $\pi_i>0$.
	Let $\p, \q$ arbitrary vectors in the probability simplex. 
	Then,
	\begin{equation}
		KL(\p,\bpi) 
		\leq KL(\p, \q)+ \max_i \max\left(\ln \frac{q_i}{\pi_i} ,0\right) \|\p - \q\|_1 
		+ KL(\q,\bpi)~.
	\end{equation}
\end{lem}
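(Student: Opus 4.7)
My plan is to reduce the claim to a single scalar inner-product bound through the standard algebraic identity linking the three KL divergences, and then close the resulting inequality using the simplex constraint together with elementary bounds on $\ln$. Starting from $\ln(p_i/\pi_i) = \ln(p_i/q_i) + \ln(q_i/\pi_i)$ inside the definition of $KL(\p,\bpi)$ and subtracting $KL(\q,\bpi) = \sum_i q_i \ln(q_i/\pi_i)$ from $\sum_i p_i \ln(q_i/\pi_i)$ yields the exact identity
\[
KL(\p,\bpi) - KL(\p,\q) - KL(\q,\bpi) = \sum_{i=1}^n (p_i - q_i) \ln\frac{q_i}{\pi_i}.
\]
Hence the lemma is equivalent to proving the scalar estimate $\sum_i (p_i - q_i) \ln(q_i/\pi_i) \leq M \|\p - \q\|_1$ with $M := \max_i \max(\ln(q_i/\pi_i), 0)$.

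Next, because $\p$ and $\q$ both live in the simplex, $\sum_i (p_i - q_i) = 0$, so the inner product is invariant under a uniform shift of $\ln(q_i/\pi_i)$. Taking the shift $c = M$ makes every entry $\ln(q_i/\pi_i) - M$ non-positive (by the definition of $M$), and splitting the sum according to the sign of $p_i - q_i$ leaves only indices with $q_i > p_i$ as potential positive contributors. This reduces the lemma to the one-sided bound
\[
\sum_{i:\,q_i > p_i} (q_i - p_i)\bigl(M - \ln(q_i/\pi_i)\bigr) \leq M \|\p - \q\|_1.
\]
On indices where $\ln(q_i/\pi_i) \geq 0$ the factor in parentheses is already $\leq M$, so those terms contribute at most $M \sum_i (q_i - p_i)^+ = \tfrac{1}{2} M \|\p - \q\|_1$.

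The remaining task, and the main obstacle, is the contribution of indices where $\ln(q_i/\pi_i) < 0$ (i.e., $q_i < \pi_i$), because there $M - \ln(q_i/\pi_i)$ can exceed $M$ by an arbitrarily large amount, and a naive H\"older estimate at this stage would only produce $\max_i |\ln(q_i/\pi_i)| \cdot \|\p - \q\|_1$, which is strictly weaker than the claimed bound. My plan for this step is to rewrite $M - \ln(q_i/\pi_i) = M + \ln(\pi_i/q_i)$, apply the elementary inequality $\ln(\pi_i/q_i) \leq \pi_i/q_i - 1$ together with $q_i - p_i \leq q_i$ to dominate each troublesome contribution by $\pi_i - q_i$, and then exploit $\sum_i (\pi_i - q_i) = 0$ (telescoping the positive mass of $\pi - q$ against its negative mass) to absorb the surplus into the favorable terms from the first split. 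The delicate part is ensuring that this absorption truly closes with the one-sided constant $M$ rather than with a symmetric $\max_i |\ln(q_i/\pi_i)|$—this is precisely what distinguishes the statement from a generic signed-measure KL estimate and what makes it play the role of a triangle inequality for the coin-betting dynamics on the simplex.
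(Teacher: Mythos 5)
Your opening moves coincide exactly with the paper's proof: the paper expands $KL(\p,\bpi)=KL(\p,\q)+KL(\q,\bpi)+\sum_i (p_i-q_i)\ln\frac{q_i}{\pi_i}$ and then finishes in a single line by asserting the very scalar bound you isolate, namely $\sum_i (p_i-q_i)\ln\frac{q_i}{\pi_i}\le M\,\|\p-\q\|_1$ with $M=\max_i\max(\ln\frac{q_i}{\pi_i},0)$. So up through your shift-and-split step you are on the paper's route, and you have correctly identified the crux that the paper passes over without justification.

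The final absorption step, however, is a genuine gap, and it cannot be repaired. After bounding $(q_i-p_i)\ln\frac{\pi_i}{q_i}\le \pi_i-q_i$ on the indices with $q_i<\pi_i$, you are left with $\sum_{i\in\mathrm{bad}}(\pi_i-q_i)$, a quantity that has no relation to $\|\p-\q\|_1$: it can be of order one while $\|\p-\q\|_1$ is arbitrarily small, whereas the only slack coming from the favorable indices is $\tfrac12 M\|\p-\q\|_1$, so the books do not balance. More fundamentally, the reduced inequality with the one-sided constant $M$ is simply false. Take $n=2$, $\bpi=(1/2,1/2)$, $\q=(\epsilon,1-\epsilon)$, $\p=(0,1)$: then $\sum_i (p_i-q_i)\ln\frac{q_i}{\pi_i}=\epsilon\ln\frac{1-\epsilon}{\epsilon}$, of order $\epsilon\ln(1/\epsilon)$, while $M\|\p-\q\|_1=2\epsilon\ln(2(1-\epsilon))=O(\epsilon)$; for $\epsilon=0.01$ this is $0.046$ versus $0.014$. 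Since your reduction is an equivalence (the identity is exact), this is also a counterexample to the inequality exactly as stated, and the one-line bound in the paper's own proof suffers from the same defect. What does hold is the estimate you dismissed as weaker: by H\"older, $\sum_i (p_i-q_i)\ln\frac{q_i}{\pi_i}\le \max_i\bigl|\ln\frac{q_i}{\pi_i}\bigr|\,\|\p-\q\|_1$, and with $\q=\p_*$ strictly inside the simplex this constant is finite and still serves the purpose for which the lemma is invoked in Theorem~\ref{thm:simplex}. So the right conclusion from your analysis is not that a cleverer absorption is needed, but that the constant must be the symmetric $\max_i\bigl|\ln\frac{q_i}{\pi_i}\bigr|$ rather than its positive part.
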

%

\begin{lem}
	\label{lem:regretsimplex}
	Define $h_t(\p) = -F(\x_t,\p)$. Then, Algorithm~\ref{alg:constrained-cb-simplex-min-max} guarantees that for any $\p\in\Delta_n$, we have
	\begin{equation}
		\label{eq:parallelp}
		\begin{aligned}
			\frac{1}{T} \sum_{t=1}^{T} h_t(\p_t)-\frac{1}{T}\sum_{t=1}^{T} h_t(\p)
			\leq \frac{1}{T}+\frac{\sqrt{\ln T+3KL(\p,\p_0)}}{\sqrt{T}}~.
		\end{aligned}
	\end{equation}
\end{lem}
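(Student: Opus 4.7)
The plan is to mirror the Euclidean proof of Corollary~\ref{cor:1}, but replacing the unconstrained-to-constrained reduction used in Lemma~\ref{lem:3} with the intrinsic simplex regret analysis of the coin-betting algorithm of \citet{orabona2016coin}. First I would reduce the functional regret on $h_t$ to a linear regret bound. Since $F$ is concave in its second argument, $h_t(\p) = -F(\x_t,\p)$ is convex in $\p$, and by the oracle definition in Section~3, $\widehat{\g}_t^{\p} \in \partial h_t(\p_t)$. Therefore
\begin{equation*}
\frac{1}{T}\sum_{t=1}^{T}\bigl(h_t(\p_t)-h_t(\p)\bigr)
\leq \frac{1}{T}\sum_{t=1}^{T}\langle \widehat{\g}_t^{\p}, \p_t-\p\rangle,
\end{equation*}
so it suffices to prove the corresponding linear regret bound for the simplex coin-betting sub-routine embedded in Algorithm~\ref{alg:constrained-cb-simplex-min-max}.

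Second, the dual iterates $\p_t$ are exactly those produced by Algorithm~\ref{alg:constrained-cb-simplex}, which is a direct adaptation of the KL-based simplex coin-betting algorithm of \citet{orabona2016coin} (their Algorithm 2). Under the boundedness assumption $\|\widehat{\g}_t^{\p}\|\leq 1$ from Section~3 (which in particular gives $\|\widehat{\g}_t^{\p}\|_\infty\leq 1$), their regret guarantee yields
\begin{equation*}
\sum_{t=1}^{T}\langle \widehat{\g}_t^{\p},\p_t-\p\rangle
\leq O\!\left(\sqrt{T\bigl(KL(\p,\p_0)+\ln T\bigr)}\right)
\end{equation*}
for every $\p\in\Delta_n$. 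Dividing by $T$ produces the stated bound, with the additive $1/T$ term absorbing the usual lower-order correction coming from the initial round and the $\ln$ argument.

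The main obstacle, and the place where care is needed, is to certify that the ``centered and truncated'' gradient construction actually used in Algorithm~\ref{alg:constrained-cb-simplex}, namely $\widetilde{\g}_{t,i}=\widehat{\g}_{t,i}^{\p}-\langle\widehat{\g}_t^{\p},\p_t\rangle$ on coordinates with $\w_{t,i}>0$ and the nonnegative truncation $\widetilde{\g}_{t,i}=\max(\widehat{\g}_{t,i}^{\p}-\langle\widehat{\g}_t^{\p},\p_t\rangle,0)$ on inactive coordinates, is exactly the one used in \citet{orabona2016coin}, and that it drives the per-coordinate coin-betting regret into a KL bound against the prior $\p_0$ rather than the true competitor $\p$. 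The nonnegative truncation is precisely what lets us invoke Lemma~\ref{lem:trianglesimplex} to decompose $KL(\p,\bpi)$ through an auxiliary $\q$ concentrated on the active coordinates, thereby transferring the bound on inactive coordinates from $\p$ to $\p_0$. Once this correspondence is verified, the rest is a mechanical instantiation of the Orabona–Pal analysis followed by the convexity reduction above.
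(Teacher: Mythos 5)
Your proof is correct and takes essentially the same route as the paper, whose own proof is a one-liner invoking Corollary 6 of \citet{orabona2016coin} together with the convexity of $h_t$ in $\p$ (linearizing via the subgradient $\widehat{\g}_t^{\p}\in\partial h_t(\p_t)$) — exactly your two steps. One small correction to your closing discussion: Lemma~\ref{lem:trianglesimplex} plays no role in this lemma (the truncated, centered gradients are handled entirely inside the Orabona--P\'al LEA analysis); it is only used later, in the proof of Theorem~\ref{thm:simplex}, to pass from $KL(\p'_T,\p_0)$ to $KL(\p_*,\p_0)$.
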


Here, we present our main theorem in this section.
\begin{thm}
	\label{thm:simplex}
	Suppose Assumption~\ref{ass:2} holds and $\p_{0,i}>0$ for every $i=1,\dots,n$. Then, Algorithm~\ref{alg:constrained-cb-simplex-min-max} guarantees that 
	\begin{align}
		\max_{\p\in\Delta_n} &F(\bar{\x}_T, \p) - \min_{\x\in\mathcal{X}} F(\x,\bar{\p}_T) \\
		&\leq \frac{4}{T}+\frac{\|\x_0-\x_*\|\sqrt{\ln\left(1+24\|\x_0-\x_*\|T^2\right)}}{\sqrt{T}} 
		+\frac{\sqrt{\ln T+3 KL(\p_*,\p_0)}}{\sqrt{T}}+o\left(\frac{1}{\sqrt{T}}\right)~.
	\end{align}
\end{thm}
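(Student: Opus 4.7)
The plan is to follow the same template as the proof of Theorem~\ref{thm:main}, substituting the Euclidean dual-side argument with one tailored to the probability simplex. First, I would invoke Lemma~\ref{prop:oco-to-minmax} to bound the duality gap by the sum of the average primal and dual regrets evaluated at the specific comparators $\x'_T \in \arg\min_{\x\in\X} F(\x,\bar{\p}_T)$ and $\p'_T \in \arg\max_{\p\in\Delta_n} F(\bar{\x}_T,\p)$. Because the $\x$-update in Algorithm~\ref{alg:constrained-cb-simplex-min-max} is identical to the Euclidean constrained coin-betting update in Algorithm~\ref{alg:constrained-cb}, and Assumption~\ref{ass:2} is unchanged, Lemma~\ref{lem:2} transfers verbatim and yields the $\|\x_0-\x_*\|$-dependent term of the claimed bound.

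For the dual side, I would apply Lemma~\ref{lem:regretsimplex} with the comparator $\p = \p'_T$, giving an average-regret bound involving $\sqrt{\ln T + KL(\p'_T,\p_0)}$. The crucial step is to replace $KL(\p'_T,\p_0)$ by $KL(\p_*,\p_0)$ up to terms that vanish with $T$. To this end I would apply Lemma~\ref{lem:trianglesimplex} with $\p=\p'_T$, $\q=\p_*$, $\bpi=\p_0$, which gives
\[
KL(\p'_T,\p_0) \leq KL(\p'_T,\p_*) + c_0\,\|\p'_T-\p_*\|_1 + KL(\p_*,\p_0),
\]
where $c_0 = \max_i \max(\ln(p_{*,i}/p_{0,i}),0)$ is a finite constant because the prior has full support ($p_{0,i}>0$) and Assumption~\ref{ass:2} forces $\p_*$ into the interior, so each $p_{*,i}>0$.

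The next step is to show that the first two terms on the right-hand side are $o(1)$ as $T\to\infty$. I would replicate the continuity-plus-compactness argument used in the proof of Lemma~\ref{lem:2}, now on the dual side: strict concavity of $F(\x_*,\cdot)$ combined with Corollary~\ref{cor:1} (whose proof carries over to the simplex setup) implies $\bar{\p}_T \to \p_*$, while strict convexity of $F(\cdot,\p_*)$ together with continuity of the map $\x \mapsto \arg\max_{\p} F(\x,\p)$ at $\x=\x_*$ yields $\p'_T \to \p_*$. Because $\p_*$ sits in the interior of $\Delta_n$, KL is continuous at $(\p_*,\p_*)$, so $KL(\p'_T,\p_*)\to 0$; the $\ell_1$-difference also vanishes by finite dimensionality. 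Plugging these $o(1)$ terms back yields $KL(\p'_T,\p_0) \leq KL(\p_*,\p_0)+o(1)$, and combining the resulting dual bound with the primal bound from Lemma~\ref{lem:2} produces the claimed rate.

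I expect the main obstacle to be the continuity of KL at $\p_*$ in the last step: along a sequence $\p'_T\to\p_*$, the relative-entropy $KL(\p'_T,\p_*)$ would blow up if any $p_{*,i}$ were zero, so the interior-optimum hypothesis of Assumption~\ref{ass:2} must be used here in an essential way. Everything else is a faithful transcription of the Euclidean argument, with Lemma~\ref{lem:regretsimplex} playing the role of Lemma~\ref{lem:3} and Lemma~\ref{lem:trianglesimplex} playing the role of the Euclidean triangle inequality used inside Lemma~\ref{lem:2}.
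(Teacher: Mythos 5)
Your proposal is correct and follows essentially the same route as the paper's proof: the simplex counterpart of Corollary~\ref{cor:1} obtained from Lemma~\ref{prop:oco-to-minmax}, Lemma~\ref{lem:1} and Lemma~\ref{lem:regretsimplex}; the application of Lemma~\ref{lem:trianglesimplex} with $\bpi=\p_0$, $\p=\p'_T$, $\q=\p_*$; and the limiting argument borrowed from Lemma~\ref{lem:2} giving $\x'_T\to\x_*$, $KL(\p'_T,\p_*)\to 0$ and the vanishing $\ell_1$ term. Your observation that the interior-optimum part of Assumption~\ref{ass:2} is what keeps $\max_i\max(\ln(p_{*,i}/p_{0,i}),0)$ finite and makes $KL(\cdot,\p_*)$ well behaved at $\p_*$ is precisely the point the paper's terse ``follow the proof of Lemma~\ref{lem:2}'' relies on implicitly.
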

The high-level proof idea of Theorem~\ref{thm:simplex} is to extend the proof of Theorem~\ref{thm:main} from the Euclidean setting to the probability simplex. The main technical ingredient is utilizing the expansion of the KL divergence in Lemma~\ref{lem:trianglesimplex} and making use of the objective gap bound in Lemma~\ref{lem:regretsimplex}. The detailed proof can be found in Appendix~\ref{appendix:fast-rate-noneuclidean}. 
%

\section{Proofs in Appendix~\ref{appendix:extension2}}
\label{appendix:fast-rate-noneuclidean}
\subsection{Proof of Lemma~\ref{lem:trianglesimplex}}
\begin{proof}
We have
\begin{equation}
	\begin{aligned}
		KL(\p,\bpi) 
		&= \sum_{i=1}^d p_i \ln \frac{p_i}{\pi_i}
		= \sum_{i=1}^d p_i \ln \frac{p_i}{q_i} + \sum_{i=1}^d p_i \ln\frac{q_i}{\pi_i}\\
		&= KL(\q,\bpi) + \sum_{i=1}^d q_i \ln\frac{q_i}{\pi_i} + \sum_{i=1}^d (p_i-q_i) \ln\frac{q_i}{\pi_i}\\
		&\leq KL(\q,\bpi) + \sum_{i=1}^d q_i \ln\frac{q_i}{\pi_i} + \max_i \max(\ln\frac{q_i}{\pi_i},0) \sum_{i=1}^d |p_i-q_i|.
	\end{aligned}
\end{equation}
\end{proof}
\subsection{Proof of Lemma~\ref{lem:regretsimplex}}
\begin{proof}
	The proof follows Corollary 6 in~\cite{OrabonaP16} and Jensen's inequality in terms of $\p$.
\end{proof}
\subsection{Proof of Theorem~\ref{thm:simplex}}
\begin{proof}
	By combining Lemma~\ref{lem:regretsimplex}, Theorem~\ref{thm:constrained_new_algo} and Lemma~\ref{prop:oco-to-minmax}, we can get the following bound
	\begin{equation}
		\label{eq:simplexbound1}
		\begin{aligned}
			\max_{\p\in\Delta_n} &F(\bar{\x}_T, \p) - \min_{\x\in\mathcal{X}} F(\x,\bar{\p}_T) \\
			&\leq \frac{4}{T}+\frac{\|\x_0-\x'_T\|\sqrt{\ln\left(1+24\|\x_0-\x_*\|T^2\right)}}{\sqrt{T}}  
			+\frac{\sqrt{\ln T+3 KL(\p'_T,\p_0)}}{\sqrt{T}},
		\end{aligned}
	\end{equation}
	where $\x'_T=\arg\min_{\x\in\mathcal{X}}F(\x,\bar{\p}_T)$ and $\p'_T=\arg\max_{\p\in\Delta_n}F(\bar{\x}_T, \p)$.
	Note that \eqref{eq:simplexbound1} is the counterpart of Corollary~\ref{cor:1} in the probability simplex setup.
	By Lemma~\ref{lem:regretsimplex} (taking $\bpi=\p_0$, $\p=\p'_T$ and $\q=\p_*$), we know that 
	\begin{equation}
		\label{eq:simplexbound2}
		\begin{aligned}
			KL(\p'_T,\p_0)\leq	KL(\p'_T,\p_*)
			+\max_{i}\max\left(\ln\frac{\p_{*,i}}{\p_{0,i}},0\right)\left\|\p'_T-\p_*\right\|_1+KL(\p_*,\p_0)~.
		\end{aligned}
	\end{equation}
	Then, we are able to follow the proof of Theorem~\ref{main:metathm} to show that $KL(\p'_T,\p_*)\rightarrow 0$ (this implies that $\max_{i}\max\left(\ln\frac{\p_{*,i}}{\p_{0,i}},0\right)\left\|\p'_T-\p_*\right\|_1\rightarrow 0$ by Pinsker's inequality). In addition, it is proved in Theorem~\ref{main:metathm} that $\x'_T\rightarrow\x_*$. Combining these facts with (\ref{eq:simplexbound2}), the theorem is proved.
\end{proof}

\section{More Experimental Results}
\label{appendix:moreexperiment}
\paragraph{Detailed Experimental Settings} \begin{itemize}
	\item SensIT Vehicle (combined): all the classes with label $2$ and $3$ are regarded as class $-1$, and the class $1$ is regarded as class $1$.
	
	\item  dna: all the classes with label $2$ and $3$ are regarded as class $-1$, and the class $1$ is regarded as class $1$.
	
	\item  gisette: we use the original dataset without any preprocessing.
	
	\item protein: all the classes with label $0$ and $2$ are regarded as class $-1$, and the class $1$ is regarded as class $1$.
		\item letter: all the classes with label $1$ to label $25$ are regarded as class $-1$, and the class $26$ is regarded as class $1$.
	
	\item  mnist: all the classes with label $0$ to label $8$ are regarded as class $-1$, and the class $9$ is regarded as class $1$.
	
	\item  madelon: we use the original dataset without any preprocessing.
	
	\item pendigits: all the classes with label $0$ to label $8$ are regarded as class $-1$, and the class $9$ is regarded as class $1$.
\end{itemize}
\paragraph{Distributionally Robust Optimization}
We consider the following distributionally robust optimization problem:
\begin{equation}
	\label{eq:DRO}
	\min_{\|\w\|\leq R}\max_{\p\in\Delta_n}\sum_{i=1}^{n}p_i\ell_i(\w)+\frac{\lambda}{2}\left\|\p-\frac{\mathbf{1}_n}{n}\right\|^2+\frac{\rho}{2}\|\w\|^2,
\end{equation}
where $\ell_i(\w)=\max(0,1-y_i\w^\top\x_i)$ with $(\x_i,y_i)\in\R^d\times\{-1,+1\}$ being the feature-label pair and $\w\in\R^m$ being the model parameter, $\p$ is a probability vector, $n$ is the number of training examples, $\mathbf{1}_n=[1;1;\ldots;1]$ with length $n$. In our experiment, we set $R=10^5$, $\lambda=\rho=10^{-4}$. Because of the added regularizer, the function becomes strictly-convex-strictly-concave and hence satisfies our Assumption~\ref{ass:2}. We compare two algorithms: the primal-dual gradient~\citep{nemirovski2009robust} and our Algorithm~\ref{alg:constrained-cb-simplex-min-max} (CB-Min-Max-Simplex). In this setup, primal-dual gradient method updates $\w$ by gradient descent and updates $\p$ by exponential gradient ascent simultaneously. The learning rates are set to be $\frac{2R}{G_{\w}\sqrt{T}}$ and $\frac{\log(n)}{G_{\p}\sqrt{T}}$ respectively for primal variable and dual variable, where $G_{\w}$ is the 2-norm of gradient in terms of $\w$, $G_{\p}$ is the infinity-norm of the gradient in terms of $\p$, and $T$ is the number of iterations. Both algorithms start from $\w_0=0$, $\p_0=[1/n;1/n,\ldots,1/n]$ and run $T=1000$ iterations (each iteration amounts to one pass of the training set). We test our algorithms on four benchmark datasets from libsvm website\footnote{https://www.csie.ntu.edu.tw/~cjlin/libsvmtools/datasets/} (SensIT Vehicle (combined), dna, gisette, protein, letter, mnist, madelon, pendigits). We report the training loss and test loss, as shown in the Figure~\ref{fig:svmdataset}.  It can be observed that our proposed parameter-free algorithm (i.e., Algorithm~\ref{alg:constrained-cb-simplex-min-max}) significantly outperforms the standard primal-dual gradient method.
\begin{figure}[t]
	\includegraphics[scale=0.18]{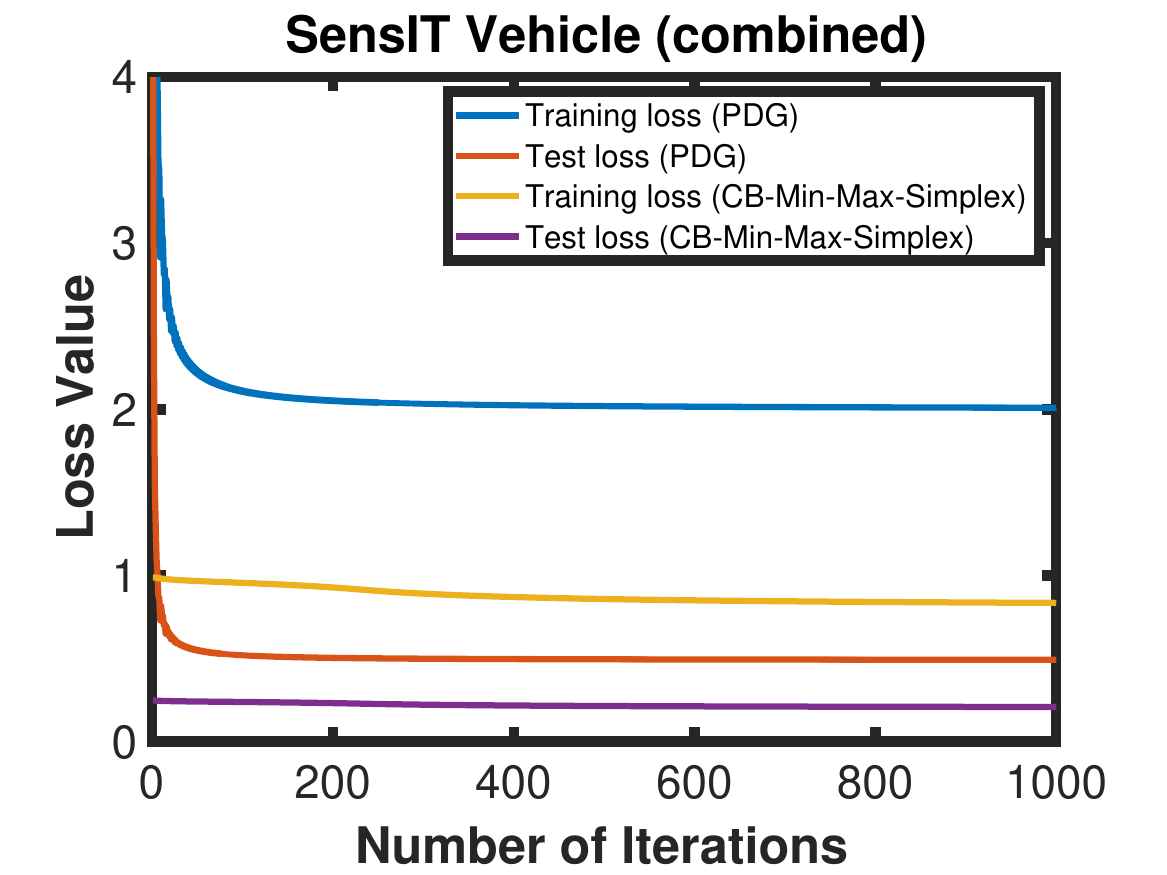}
	\includegraphics[scale=0.18]{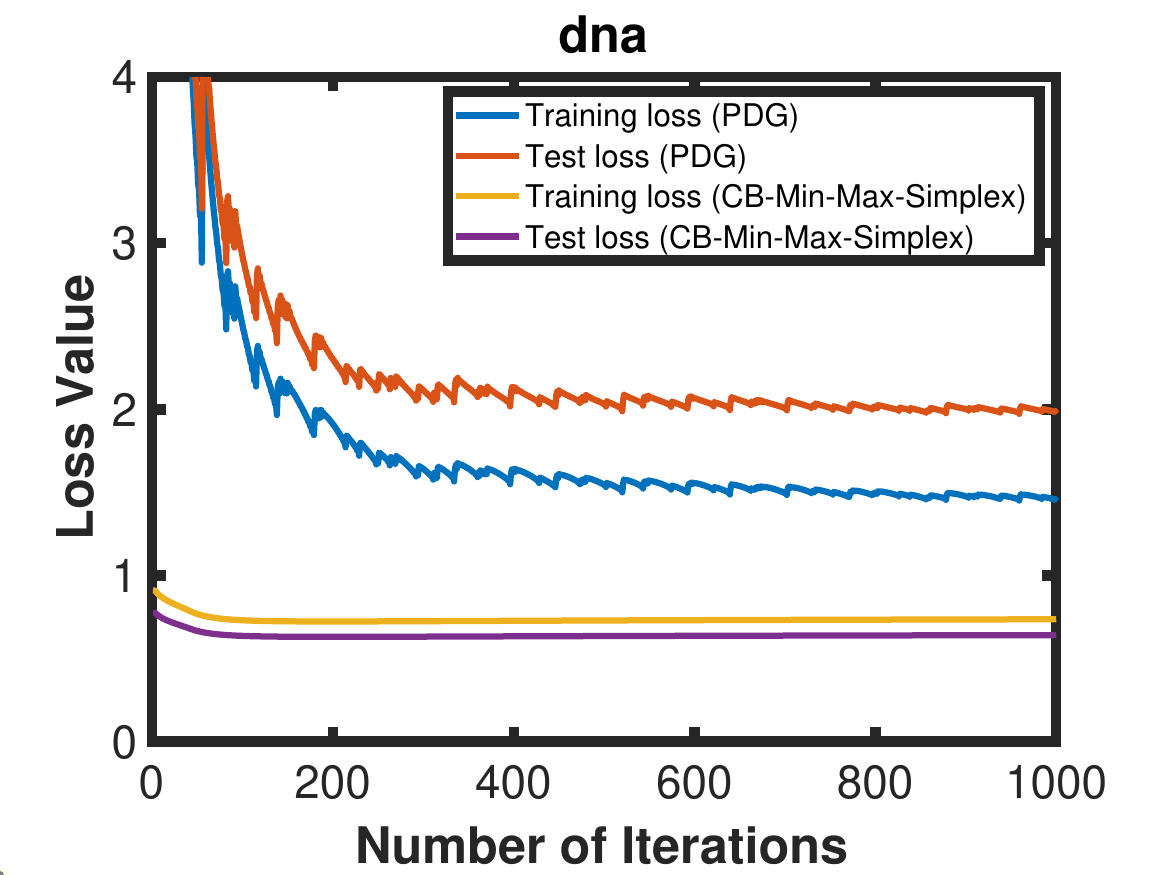}
	\includegraphics[scale=0.18]{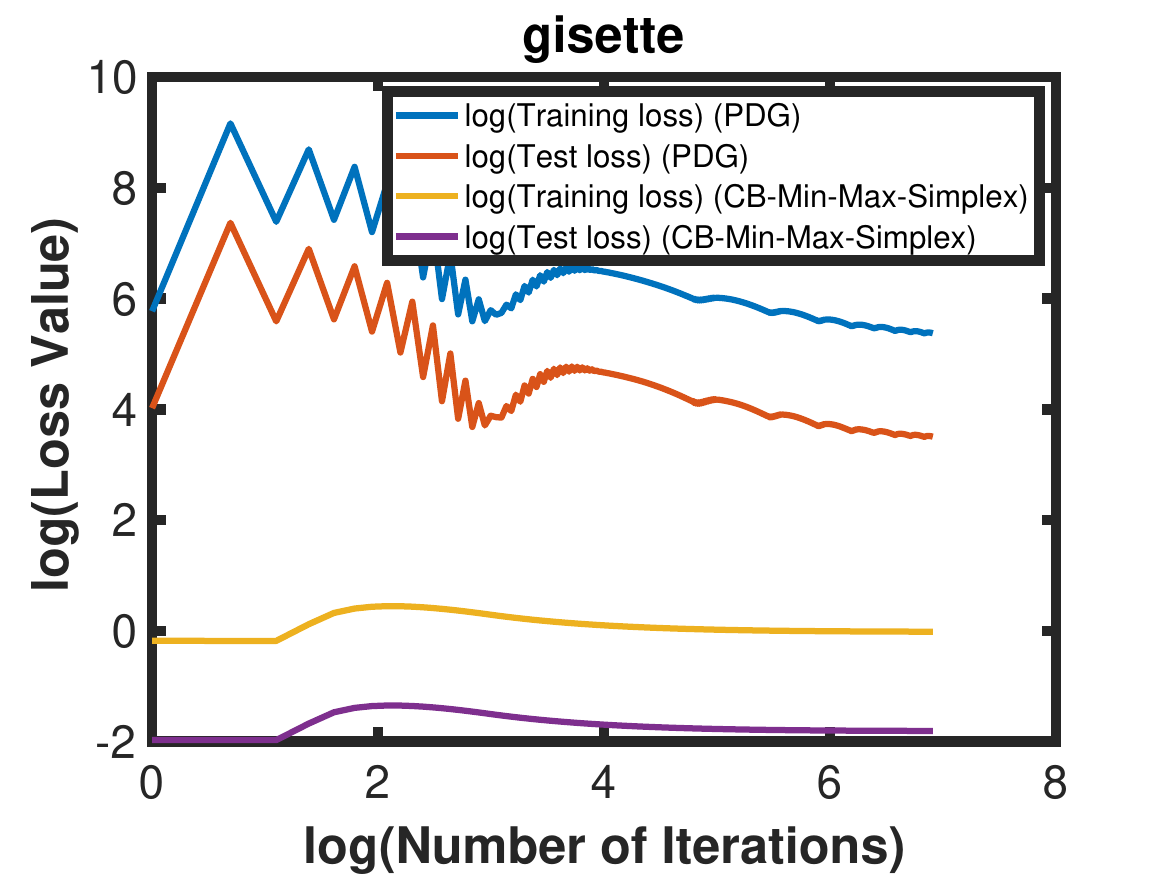}
	\includegraphics[scale=0.18]{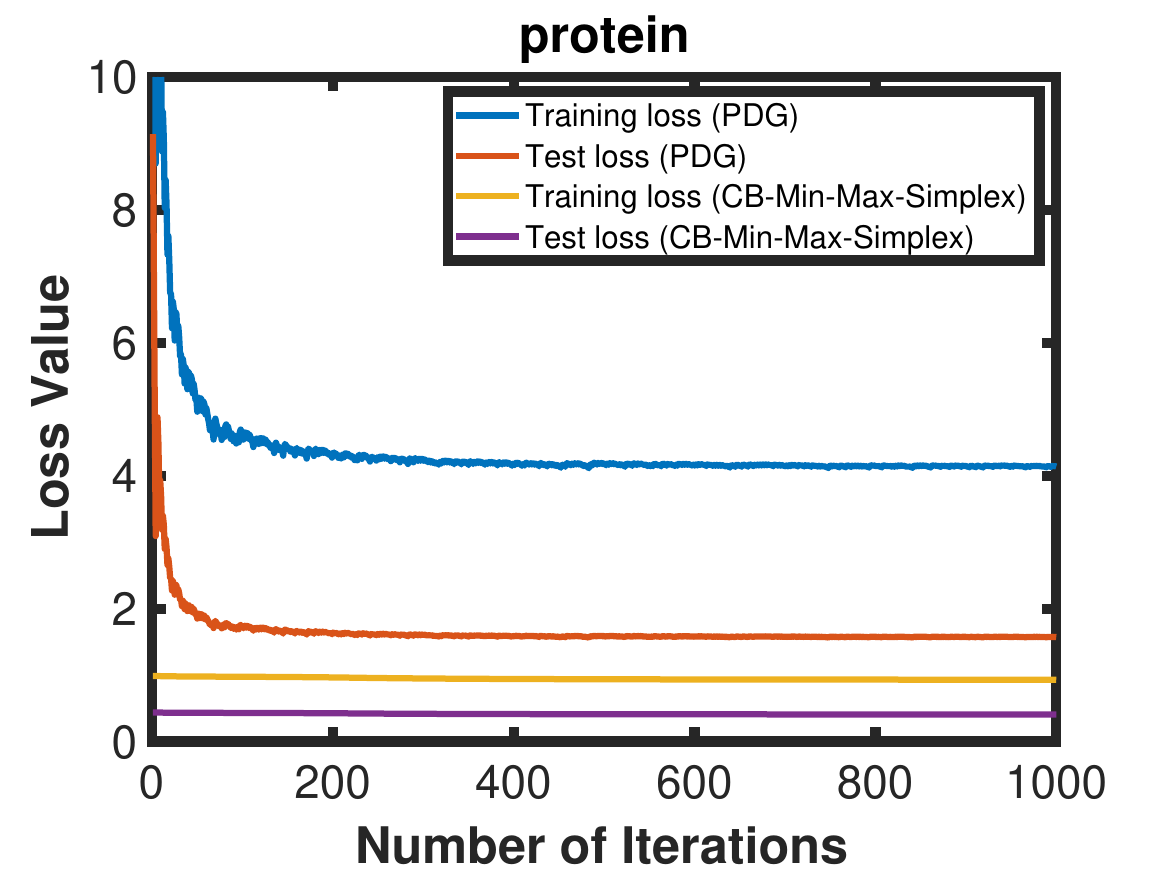}
	\includegraphics[scale=0.18]{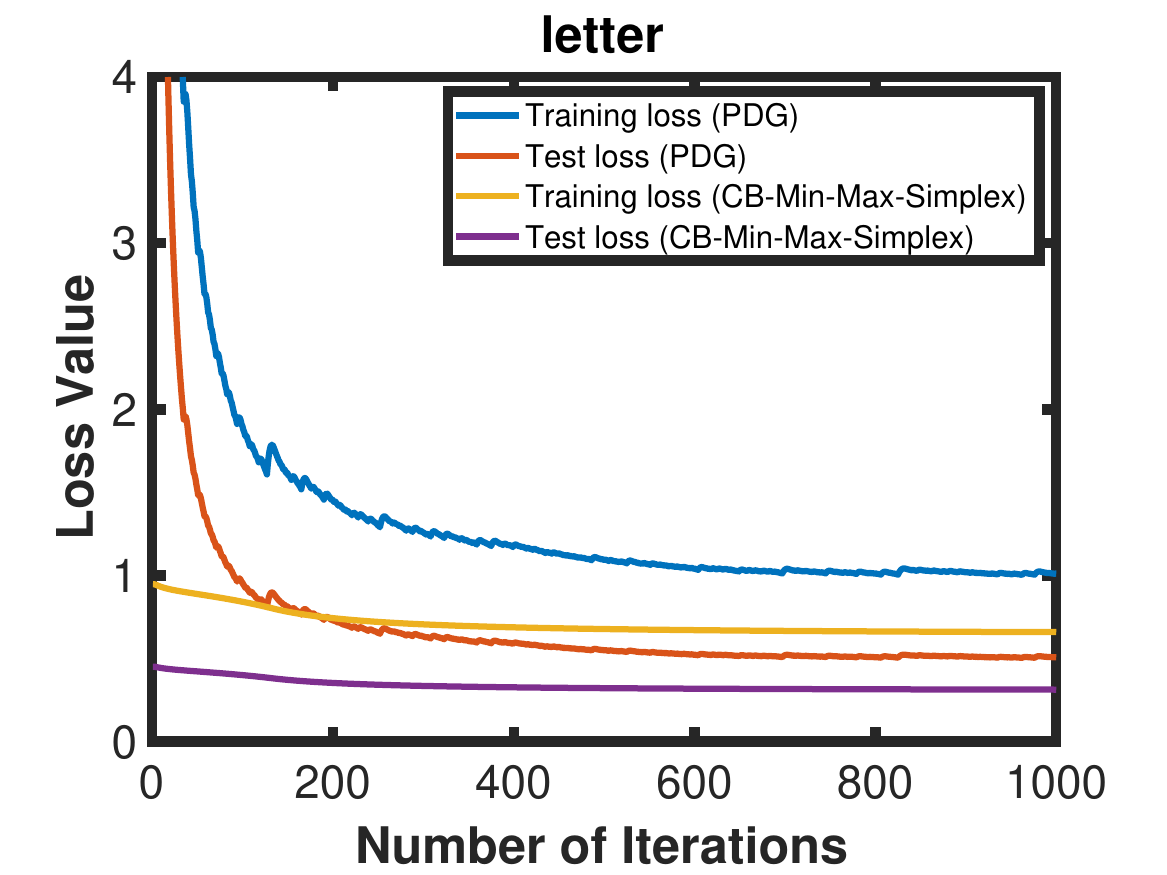}
	\hspace{0.05in}
	\includegraphics[scale=0.18]{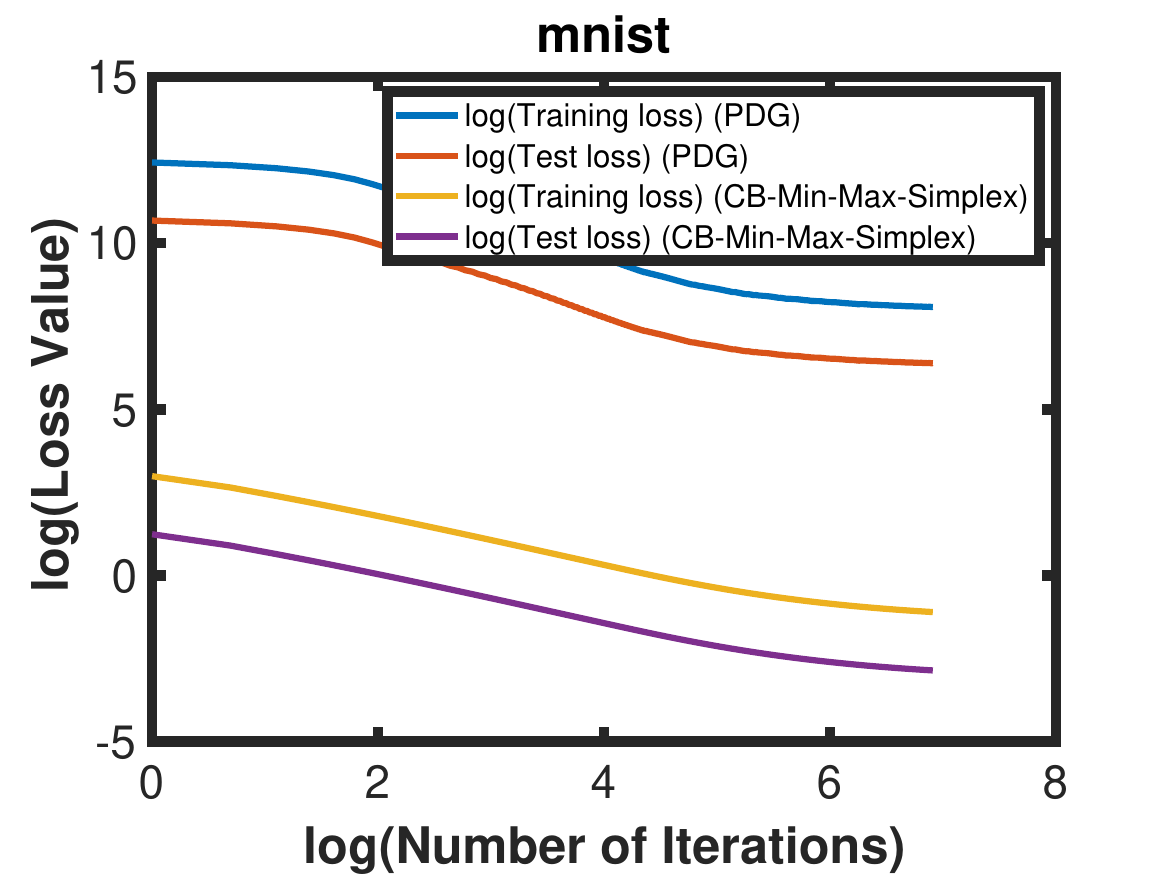}
	\hspace{0.05in}
	\includegraphics[scale=0.18]{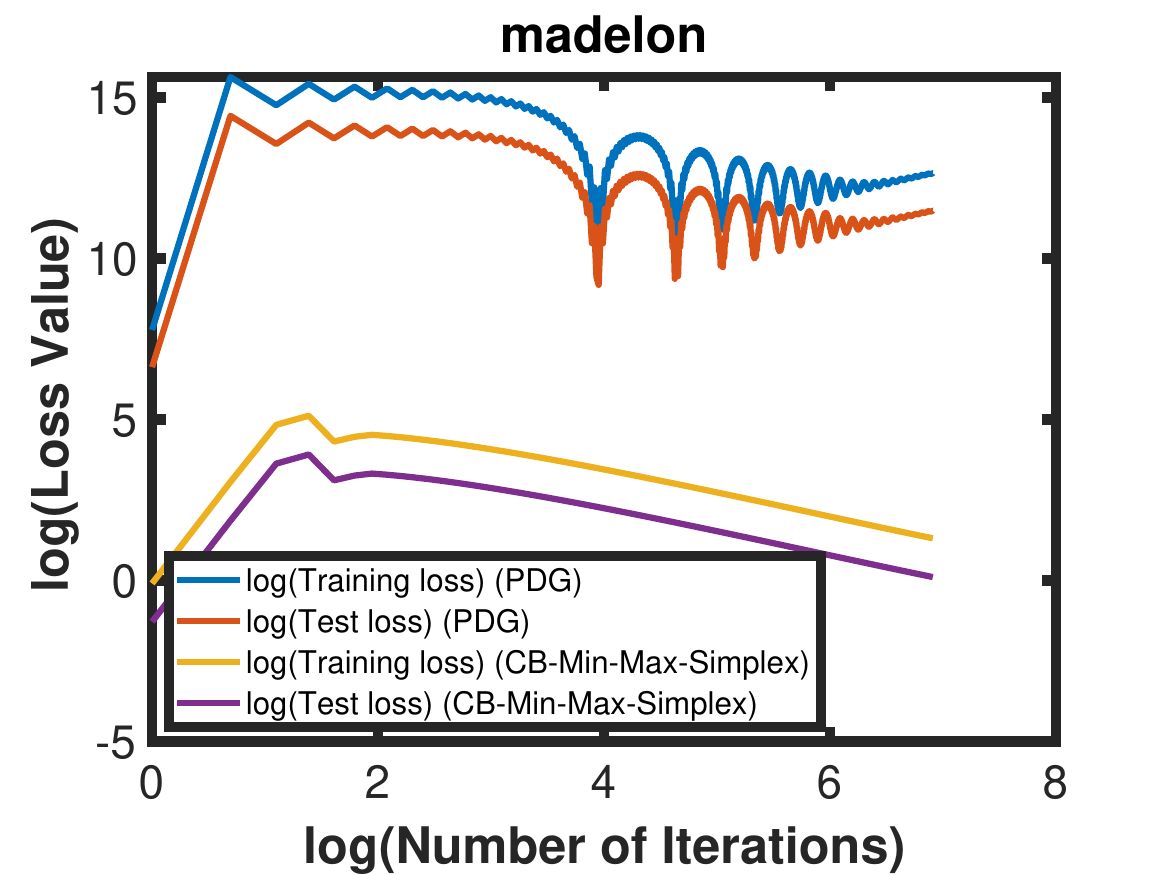}
		\hspace{0.05in}
	\includegraphics[scale=0.18]{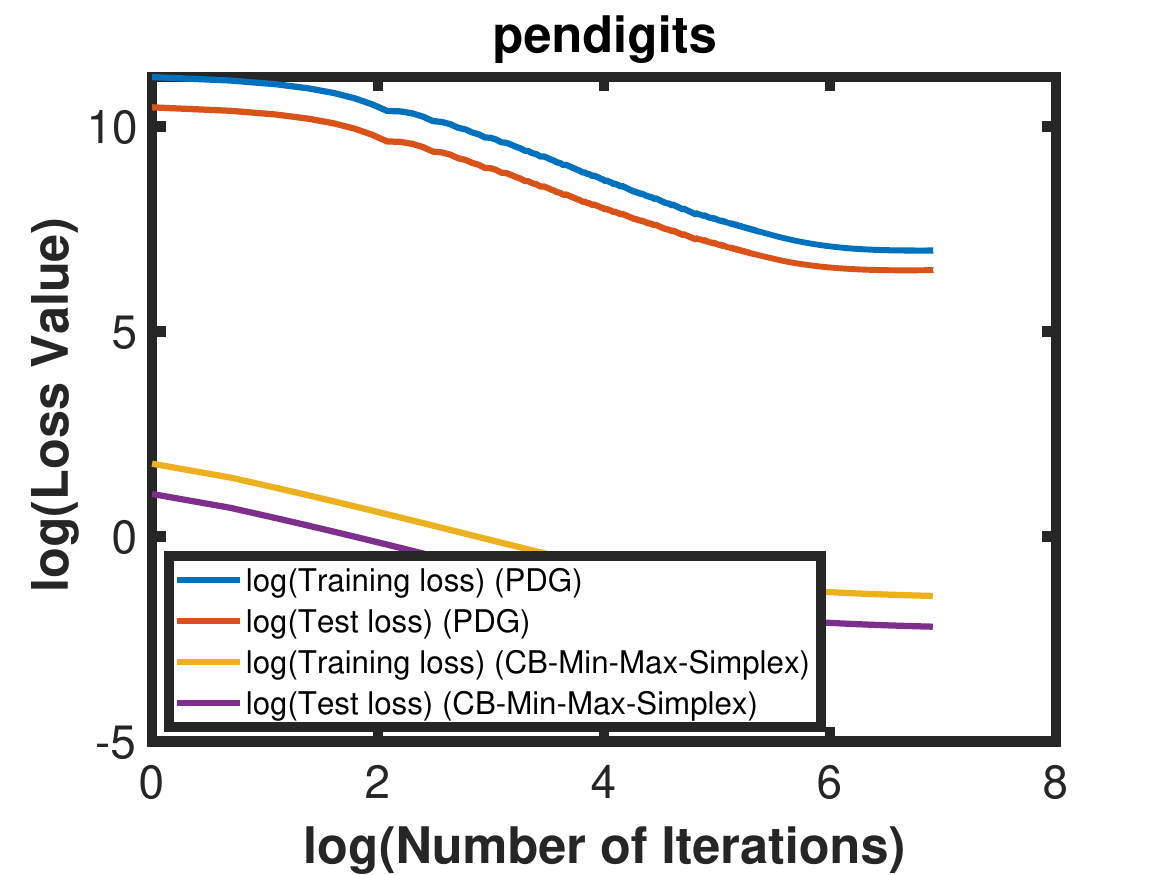}
	\caption{Comparison of different algorithms for the distributionally robust optimization problem~(\ref{eq:DRO}) on SensIT Vehicle (combined), dna, gisette, protein,  letter, mnist, madelon, pendigits  datasets. PDG stands for primal-dual gradient method, CB-Min-Max-Simplex stands for Algorithm~\ref{alg:constrained-cb-simplex-min-max}.}
	\label{fig:svmdataset}
	\vspace*{-0.1in}
\end{figure}

\section{Examples Satisfying Strict-Convexity-Strict-Concavity, Growth Condition and H{\"{o}}lder Continuous Solution Mapping}
\label{app:examples}
\paragraph{Example 1:} We show that problem~(\ref{prob:synthetic}) $F(x,y)=\frac{\rho}{4}x^4+|x|+\rho xy-|y|-\frac{\rho}{4}y^4$, where $|x|\leq R_x$ and $|y|\leq R_y$, satisfies the Assumptions~\ref{ass:2}, \ref{ass:3} and~\ref{ass:4}.
\begin{proof}
	
We first show that the function is strictly-convex-strictly-concave. The function $F(x,y)$ is strictly-convex in $x$, since $\frac{\rho}{4}x^4$ is strictly convex in terms of $x$ and $|x|+\rho xy$ is convex in terms of $x$, and the summation of a strictly convex function and a convex function is strictly convex. The strict-concavity in terms of $\y$ can be proved using a parallel argument.

We then show that Assumption~\ref{ass:3} holds with $\theta_1=1$. We know that the optimal solution of the problem (\ref{prob:synthetic}) is $(x_*,y_*)=(0,0)$, so we have that 
\begin{equation*}
	\begin{cases}
	|x-x_*|=|x|\leq \frac{\rho}{4}x^4+|x|=F(x,y_*)-F(x_*,y_*),\\
		|y-y_*|=|y|\leq \frac{\rho}{4}y^4+|y|=F(x_*,y_*)-F(x_*,y).
\end{cases}
\end{equation*}
and hence we have shown that $\theta_1=1$.

Finally, we show that Assumption~\ref{ass:4} holds with $\theta_2=1/3$. One can show that
\begin{equation*}
	\arg\min_{x}F(x,y)=
	\begin{cases}
0, &\text{ if } |y|\leq \frac{1}{\rho}\\
\max\left((-y+\frac{1}{\rho})^{1/3},-R_x\right),  & \text{ if } y>\frac{1}{\rho} \text{ and } y\leq R_y \\
\min\left((-y-\frac{1}{\rho})^{1/3}, R_x\right), & \text{ if } y<-\frac{1}{\rho} \text{ and } y\geq  -R_y~. 
\end{cases}
\end{equation*}
We have $|\arg\min_{x}F(x,y_1)-\arg\min_{x}F(x,y_2)|\leq |y_1-y_2|^{1/3}$. The parallel argument of the H{\"{o}}lder continuity can also be made in terms of $\arg\min_{y}F(x,y)$.
\end{proof}

\paragraph{Example 2} The problem $F(x,y)=\frac{\rho}{2n}x^{2n}+|x|+\rho xy-|y|-\frac{\rho}{2n}y^{2n}$, where $n=1,2\ldots$, satisfies Assumptions~\ref{ass:2}, \ref{ass:3} and~\ref{ass:4}.
\begin{proof}
	We can use a similar argument as in Example 1 to show that the problem is strictly-convex-strictly-concave, Assumption~\ref{ass:3} holds with $\theta_1=1$ and Assumption~\ref{ass:4} holds with $\theta_2=\frac{1}{2n-1}$.
\end{proof}

\end{document}